\DeclareFontShape{OMX}{cmex}{m}{n}{%
  <-7.5> cmex7
  <7.5-8.5> cmex8
  <8.5-9.5> cmex9
  <9.5-> cmex10
}{}
\DeclareSymbolFont{largesymbols}{OMX}{cmex}{m}{n} 
\numberwithin{equation}{section}
\newtheorem{theorem}    {Theorem}[section]
\newtheorem{lemma}      [theorem] {Lemma}
\newtheorem{conjecture} [theorem] {Conjecture}
\newtheorem{definition} [theorem] {Definition}
\newtheorem{proposition}[theorem] {Proposition}
\newtheorem{problem}    [theorem] {Problem}
\setlist[enumerate,1]{label=(\roman*)}
\newcommand{\comp}[1]{#1^c}
\DeclareMathOperator{\defined}{\coloneqq}
\DeclareMathOperator{\from}{\colon}
\newcommand{\euler}{e}
\DeclareMathOperator{\prob}{\mathbb{P}}
\DeclareMathOperator{\expec}{\mathbb{E}}
\DeclareMathOperator{\inj}{inj}
\newcommand{\arrows}[1]{\stackrel{#1}{\rightarrow}}
\newcommand{\notarrows}[1]{\centernot{\stackrel{#1}{\rightarrow}}}
\newcommand{\barrows}[1]{\stackrel{#1}{\hookrightarrow}}
\newcommand{\notbarrows}[1]{\centernot{\stackrel{#1}{\hookrightarrow}}}
\newcommand{\Ram}[2]{\mathrm{r}_{#1}(#2)}
\newcommand{\RamN}[2]{\mathrm{R}_{#1}(#2)}
\newcommand{\BipRamN}[2]{\mathrm{B}_{#1}(#2)}
\newcommand{\BlowRam}[4]{\mathrm{B}\big(#2 \arrows{#1} #3 ; #4 \big)}
\newcommand{\SizeRam}[2]{\hat{\mathrm{r}}_{#1}(#2)}
\newcommand{\Gnp}[2]{G(#1,#2)}
\newcommand{\Mult}[3]{M_{#1}(#2;#3)}
\newcommand{\RamRob}[3]{\beta_{#1}(#2;#3)}
\begin{document}

\title{Blowup Ramsey numbers}

\author{Victor Souza}
\address{IMPA, Estrada Dona Castorina 110,
Jardim Botânico, Rio de Janeiro, 22460-320, Brazil}
\email{souza@impa.br}


\begin{abstract}
We study a generalisation of the bipartite Ramsey numbers to blowups of graphs.
For a graph $G$, denote the $t$-blowup of $G$ by $G[t]$.
We say that $G$ is $r$-Ramsey for $H$,
and write $G \stackrel{r}{\rightarrow} H$,
if every $r$-colouring of the edges of $G$
has a monochromatic copy of $H$.
We show that if $G \stackrel{r}{\rightarrow} H$,
then for all $t$,
there exists $n$ such that $G[n] \stackrel{r}{\rightarrow} H[t]$.
In fact, we provide exponential lower and upper bounds
for the minimum $n$ with $G[n] \stackrel{r}{\rightarrow} H[t]$,
and conjecture an upper bound of the form $c^t$,
where $c$ depends on $H$ and $r$,
but not on $G$.
We also show that this conjecture holds for $G(n,p)$
with high probability,
above the threshold for the event $G(n,p) \stackrel{r}{\rightarrow} H$.
\end{abstract}


\clearpage\maketitle
\thispagestyle{empty}


\section{Introduction}
\label{sec:intro}

We say that a graph $G$ is $r$-Ramsey for a graph $H$,
and write $G \arrows{r} H$,
if every $r$-colouring of the edges of $G$
contains a monochromatic copy of $H$.
The classical theorem of Ramsey~\cite{ramsey1930original} from 1930
shows that for every $t$,
there is an $n$ such that {$K_n \arrows{r} K_t$}.
The smallest $n$ with this property is called the diagonal $r$-Ramsey number
and is denoted $\RamN{r}{t}$.
It was proved by Erd\H{o}s and Szekeres~\cite{erdos_szekeres1935combinatorial}
and by Erd\H{o}s~\cite{erdos1947some_remarks}
that $2^{t/2} \leq \RamN{2}{t} \leq 4^t$.
Currently, the best bounds are
\begin{equation*}
  \bigl( 1 + o(1) \bigr)
  \bigl(\sqrt{2}/\euler\bigr) \, t 2^{t/2}
    \leq \RamN{2}{t}
    \leq e^{-c (\log t)^2} 4^t,
\end{equation*}
for some constant $c > 0$.
The lower bound,
due to Spencer~\cite{spencer1975ramsey_lower},
is an application of the Lovász Local
Lemma~\cite{erdos_lovasz1975problems}.
The upper bound was due to Sah~\cite{sah2020ramsey},
improving on previous results of
Rödl~\cite{graham_rodl1987ramsey_numbers},
Thomason~\cite{thomason1998upper_ramsey}
and Conlon~\cite{conlon2009diagonal_ramsey}.

The first results on the bipartite analogue of the Ramsey numbers
were proved by
Beineke and Schwenk~\cite{beineke_schwenk1976bipartite_ramsey} in 1975.
The bipartite $r$-Ramsey number $\BipRamN{r}{t}$
is defined to be the smallest $n$
such that {$K_{n,n} \arrows{r} K_{t,t}$}.
The best current bounds for these numbers,
in the case $r = 2$, are
\begin{equation*}
  \bigl(1+o(1)\bigr)
  \bigl(\sqrt{2}/\euler\bigr)\, t 2^{t/2}
    \leq \BipRamN{2}{t}
    \leq \bigl(1+o(1)\bigr) \log_2(t) 2^{t+1},
\end{equation*}
where the lower bound
is due to Hattingh and Henning~\cite{hattingh_henning1998bipartite_ramsey},
and the upper bound to Conlon~\cite{conlon2008bipartite_ramsey}.

In this paper,
we consider a generalisation of the bipartite Ramsey numbers
to blowups of general graphs.
We denote by $G[t]$ the $t$-blowup of $G$
(see \Cref{sec:upper_bound} for a precise definition)
and call a copy of $H[t]$ in $G[n]$ \emph{canonical}
if it is the $t$-blowup of a copy of $H$ in $G$.
We say that $G[n]$ is \emph{canonically $r$-Ramsey} for $H[t]$,
and write {$G[n] \barrows{r} H[t]$},
if every $r$-colouring of the edges of $G[n]$
has a canonical monochromatic copy of $H[t]$.
Define
\begin{equation*}
  \BlowRam{r}{G}{H}{t}
  = \min \bigl\{ n : G[n] \barrows{r} H[t] \bigr\},
\end{equation*}
as the \emph{blowup Ramsey numbers}.
This generalises the bipartite Ramsey numbers,
since every copy of $K_2[t]$ in $K_2[n]$ is canonical,
so {$\BipRamN{r}{t} = \BlowRam{r}{K_2}{K_2}{t}$}.

A necessary condition for these numbers to be finite
is that {$G \arrows{r} H$}.
Indeed, if {$G \notarrows{r} H$},
consider a colouring of $G$ without monochromatic copies of $H$.
Taking the $n$-blowup of this colouring,
we see that {$G[n] \notbarrows{r} H$} for all $n$.
Assuming that {$G \arrows{r} H$},
one can obtain a bound on {$\BlowRam{r}{G}{H}{t}$} in the following way.
Let $n$ be sufficiently large
and consider an $r$-colouring of the edges $G[n]$.
Repeatedly apply the bipartite Ramsey theorem
between vertex classes in $G[n]$ corresponding to edges of $G$.
Each time, restrict the vertex classes
to contain only vertices used by the monochromatic bipartite graph we obtain.
Doing this for each of the $e(G)$ pairs of vertex classes with edges between,
we obtain a canonical copy of $G[t]$ in $G[n]$
for which the colouring is the blowup of a colouring of $G$.
Since {$G \arrows{r} H$},
we have a monochromatic canonical copy of $H[t]$.
In fact, this shows that
{$\BlowRam{r}{G}{H}{t}
  \leq \BipRamN{r}{ \BipRamN{r}{\dotsb \BipRamN{r}{t} \dotsb}}$},
that is, an exponential tower of height $e(G)$.
Our first result gives a singly exponential bound for {$\BlowRam{r}{G}{H}{t}$}.

\begin{theorem}
\label{thm:upper_bound_blow}
If {$G \arrows{r} H$},
then {$G[c^t] \barrows{r} H[t]$}
for some constant $c = c(G,H,r)$.
\end{theorem}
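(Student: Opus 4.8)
The plan is to decouple the dependence on $G$ from the dependence on $H$: first use $G \arrows{r} H$ to reduce an arbitrary colouring of $G[n]$ to a ``supersaturated'' local picture that only involves $H$, and then extract the blowup from that picture.

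\textbf{Step 1 (removing $G$).} Fix an $r$-colouring $\Psi$ of $G[n]$, with vertex classes $(V_v)_{v \in V(G)}$. For each system of representatives $x = (x_v)_{v\in V(G)} \in \prod_v V_v$, the assignment $uv \mapsto \Psi(x_u x_v)$ (for $uv \in E(G)$) is an $r$-colouring of $E(G)$, so by $G \arrows{r} H$ it admits a monochromatic copy of $H$; record an embedding $\psi_x \from H \hookrightarrow G$ realising one, together with its colour $\mu_x \in [r]$. There are at most $v(G)^{v(H)} r$ pairs $(\psi_x, \mu_x)$, a number depending only on $G, H, r$, so some pair $(\psi^*, \mu^*)$ occurs for at least $\epsilon n^{v(G)}$ of the $n^{v(G)}$ systems, with $\epsilon = \epsilon(G,H,r) > 0$. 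Projecting these systems onto the $v(H)$ coordinates indexed by $\psi^*(V(H))$ leaves at least $\epsilon n^{v(H)}$ distinct tuples $(y_h)_{h \in V(H)}$, $y_h \in V_{\psi^*(h)}$, with $\Psi(y_h y_{h'}) = \mu^*$ for all $hh' \in E(H)$. Writing $F_{hh'}$ for the bipartite graph of $\mu^*$-coloured edges between $V_{\psi^*(h)}$ and $V_{\psi^*(h')}$ (for $hh' \in E(H)$), we obtain an ``$H$-patterned'' graph with all classes of size $n$ and at least $\epsilon n^{v(H)}$ partite copies of $H$; and a choice of $T_h \subseteq V_{\psi^*(h)}$ with $|T_h| = t$ and $T_h \times T_{h'} \subseteq F_{hh'}$ for every $hh' \in E(H)$ is precisely a canonical $\mu^*$-monochromatic copy of $H[t]$ in $G[n]$, being the $t$-blowup of the copy $\psi^*(H)$ of $H$ in $G$. (Note the dependence on $G$ now enters only through $\epsilon$, and only through $v(G)$; this is what makes the conjectured removal of the $G$-dependence plausible, and already improves on the tower of height $e(G)$ from the naive argument.)

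\textbf{Step 2 (the $H$-only problem).} It remains to show: there is $c = c(\epsilon, H)$ such that any $H$-patterned graph with all classes of size $n \geq c^t$ and at least $\epsilon n^{v(H)}$ partite copies of $H$ contains a canonical $H[t]$. I would prove this by induction on $v(H)$. The base case $v(H) = 2$ is the Kővári--Sós--Turán theorem applied to $F_{h_1 h_2}$: a bipartite graph of density $\geq \epsilon$ with parts of size $n$ contains $K_{t,t}$, and --- crucially --- the extremal constant in Kővári--Sós--Turán is bounded uniformly in $t$, so this already holds once $n \geq (3/\epsilon)^t$. For the inductive step, choose $h^* \in V(H)$; pass to the at least $\epsilon n/2$ vertices of $V_{\psi^*(h^*)}$ that lie in at least $\epsilon n^{v(H)-1}/2$ of the partite copies in the role of $h^*$ (such vertices automatically have linear-sized neighbourhoods in each class indexed by $N_H(h^*)$), and then use dependent random choice to select a $t$-set $T_{h^*}$ among them whose common neighbourhood, across the classes indexed by $N_H(h^*)$, still carries many partite copies of $H - h^*$. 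Applying the induction inside that common neighbourhood produces the classes $T_h$ for $h \neq h^*$, and $T_{h^*}$ attaches to them by construction.

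\textbf{Main obstacle.} The difficulty is entirely quantitative and lies in Step 2. The naive bookkeeping --- pass to common neighbourhoods, lowering $v(H)$ by one each time down to $K_2$, then invoke Kővári--Sós--Turán --- lets the density of the surviving pattern fall to $\epsilon^{\Theta(t)}$ before KST is applied, and feeding a density as small as $\epsilon^{\Theta(t)}$ into the $K_{t,t}$ threshold forces $n \geq 2^{\Omega(t^2)}$, which is \emph{not} of the form $c^t$. (The random lower-bound construction only forces $n \geq 2^{\Omega(t)}$, so $c^t$ ought to be attainable.) The resolution must exploit that the many-copies hypothesis is strictly stronger than positive density: the dependent random choice has to be arranged so that at every step it is the \emph{density} of the surviving pieces --- not merely their size --- that stays bounded below by an absolute constant. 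The classes then shrink only by an $\exp(O(t))$ factor per step, so the final application of Kővári--Sós--Turán takes place in a piece of size $\geq \exp(-O(t))\, n$ but density $\Omega(1)$, yielding $n \geq c^t$ overall. Making this density-preserving dependent random choice work, and verifying that it composes cleanly over the $v(H) - 1$ reduction steps, is the technical heart of the argument.
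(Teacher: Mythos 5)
Your Step 1 is correct and is essentially the paper's opening move in a different costume: the paper double-counts canonical monochromatic copies of $H$ over all $n^{v(G)}$ canonical copies of $G$ and pigeonholes on colour and on the underlying copy of $H$ in $G$, arriving at one copy $H'$ of $H$ in $G$ and a colour with at least $\rho n^{v(H)}$ canonical monochromatic copies of $H$ inside $H'[n]$, where $\rho = r^{-1}\RamRob{r}{H}{G}$; your pigeonhole on the pair $(\psi_x,\mu_x)$ yields the same conclusion with $\epsilon = (v(G)^{v(H)}r)^{-1}$ in place of $\rho$. Both constants depend on $G$, which is all that is needed here.

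The gap is that your Step 2 is precisely the paper's Theorem~\ref{thm:nikiforov_blowup} (a blowup variant of Nikiforov's theorem), and you have not proved it --- your own ``main obstacle'' paragraph concedes that the naive descent gives $n \geq 2^{\Omega(t^2)}$ and leaves the fix as ``the technical heart.'' Your proposed fix (a density-preserving dependent random choice that picks $T_{h^*}$ first and recurses inside its common neighbourhood) is exactly the order of operations that causes the density to decay with $t$, and you give no mechanism for preventing that. The way Nikiforov (and the appendix of the paper) escapes is to reverse the order: one first prunes the family of copies so that every surviving copy of $H-v$ extends to at least $(\rho/2)n$ vertices of the class of $v$, then applies the induction hypothesis to $H-v$ with density parameter $\rho/2$ --- so the density parameter only \emph{halves} per level, ending at $\rho/2^{v(H)-2}$, independent of $t$ --- to cover a blowup $(H-v)[t']$ with $t' = \Theta(\log n)$, and only then forms a single auxiliary bipartite graph of density $\geq \rho/2$ between the $t'$ disjoint copies of $H-v$ and the $n$ vertices of the class of $v$. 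The exponential-in-$t$ requirement on $n$ enters solely through the final K\H{o}v\'ari--S\'os--Tur\'an-type step (Lemma~\ref{lem:nikiforov}), whose constraint $t \leq \rho|A|/4+1$ with $|A| = t' = \Theta(\log n)$ is what forces $n \geq c^t$. Without this (or an equivalent) argument, your proof establishes only the qualitative reduction, not the singly exponential bound, and indeed not even finiteness of the blowup Ramsey number beyond what the tower-type argument in the introduction already gives.
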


In \Cref{sec:lower_bound},
using the Lovász Local Lemma,
we prove a corresponding exponential lower bound of the form
\begin{equation*}
  \BlowRam{r}{G}{H}{t}
  \geq \bigl(1+o(1)\bigr) \bigl(r^{d(H)/2}\bigr)^{t},
\end{equation*}
where $d(H) = 2e(H)/v(H)$ is the average degree of $H$.
\Cref{thm:lower_bound_blow} provides the precise lower bound we obtain,
which recovers the best known bound on $\BipRamN{2}{t}$.

Note that the lower bound we obtain
does not depend on the graph $G$ asymptotically.
If $G$ is large and has many copies of $H$,
it should be harder for a random colouring of $G[n]$
to avoid a canonical monochromatic copy of $H[t]$.
In fact, we conjecture that for all $r \geq 2$
and graphs $H$,
there is a constant $c = c(H,r)$ such that
if {$G \arrows{r} H$},
then {$G[c^t] \barrows{r} H[t]$}.
See \Cref{conj:upper}.

Although we have not managed to settle this conjecture,
not even the case $r = 2$
and $H = K_3$,
we can provide some evidence to support it.
More precisely,
we show that above the threshold for the event
{$\Gnp{n}{p} \arrows{r} H$},
the conjecture holds for $\Gnp{n}{p}$ with high probability.

Let $m_2(H)$ be the $2$-density of a graph $H$
(see \Cref{sec:random_graphs} for a precise definition).
Rödl and Ruciński~\cite{rodl_rucinski1995ramsey_threshold}
proved that $p = n^{-1/m_2(H)}$ is the correct order
for the threshold of the event {$\Gnp{n}{p} \arrows{r} H$}
when $H$ has at least one component that is not a star.
When $H$ is a star forest
and $\Delta(H) \geq 2$,
the threshold occurs at a lower value of $p$,
while if $\Delta(H) = 1$ then there is a coarse threshold at $p = 1/n^2$.

\begin{theorem}
\label{thm:random_graph_blow}
Let $r \geq 2$
and let $H$ be a graph with maximum degree $\Delta(H) \geq 2$.
There are constants $c = c(H,r)$
and $C = C(H,r)$ such that,
if $p \geq Cn^{-1/m_2(H)}$ then
\begin{equation*}
  \lim_{n \to \infty}
  \prob \bigl( \Gnp{n}{p} [c^t] \barrows{r} H[t] \bigr) = 1.
\end{equation*}
\end{theorem}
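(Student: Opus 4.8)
The plan is to run the blow-up argument underlying \Cref{thm:upper_bound_blow} ``inside'' $\Gnp{n}{p}$, but to pay its cost out of the \emph{abundance} of monochromatic copies of $H$ provided by a supersaturated, many-colour form of the Rödl--Ruciński theorem rather than out of $G$; this is precisely what is needed to make the base $c$ depend only on $H$ and $r$. Write $G = \Gnp{n}{p}$, set $N = c^t$, and fix auxiliary constants $\kappa = \kappa(H,r)$ and $c = c(H,r)$ to be chosen at the end. We first record the sparse input. For every fixed $L$, once $C$ is large enough in terms of $L$ the density $p \ge C n^{-1/m_2(H)}$ lies above the threshold for the $L$-colour Ramsey property $G \arrows{L} H$: for a non-star-forest $H$ this threshold has order $n^{-1/m_2(H)}$ by Rödl--Ruciński (with a constant depending on $L$), while for a star forest with $\Delta(H) \ge 2$ it is even smaller. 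Using the hypergraph container method one moreover obtains a supersaturated statement: there is $\gamma = \gamma(L,H) > 0$ such that, with high probability, \emph{every} $L$-colouring of $E(G)$ contains at least $\gamma\, n^{v(H)} p^{e(H)}$ monochromatic copies of $H$. Since $\Delta(H) \ge 2$, the quantity $n^{v(H)} p^{e(H)}$ tends to infinity for fixed $C$, which is all we use. Condition on $G$ having this property for the single value $L = r^{\kappa^2}$ fixed below.

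Next, reduce the colouring. Fix an $r$-colouring $\chi$ of $E(G[N])$, partition each blow-up class of $G[N]$ into $\kappa$ equal blocks, and for each edge $ab \in E(G)$ and each of the $\kappa^2$ ordered pairs consisting of a block of $V_a$ and a block of $V_b$, apply the bipartite Ramsey theorem to the induced $r$-colouring of the corresponding $K_{N/\kappa, N/\kappa}$ --- possible once $c$ is large enough in terms of $H$ and $r$, so that $N/\kappa \ge \BipRamN{r}{t}$ --- to extract a monochromatic $K_{t,t}$, and record its colour. This assigns to each edge of $G$ an element of $[r]^{\kappa^2}$, i.e.\ defines an $L$-colouring $\Phi$ of $E(G)$ with $L = r^{\kappa^2}$. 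By the conditioned property at least $\gamma\, n^{v(H)} p^{e(H)}$ copies of $H$ are $\Phi$-monochromatic, and after pigeonholing over the $L$ colours at least $(\gamma/L)\, n^{v(H)} p^{e(H)}$ of them carry one common record $\Phi^* \in [r]^{\kappa^2}$ on all their edges.

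Now fix the block of each vertex of $H$. Regard $\Phi^*$ as an $r$-colouring of the $\kappa^2$ ordered pairs of block-labels. Applying Ramsey's theorem to the $\kappa$ labels --- and using that, after orienting the edges of $G$ by a fixed vertex ordering, the edges of $H$ inherit an acyclic orientation, so that $\Phi^*$ restricted to a Ramsey-homogeneous set is monochromatic on all consistently oriented pairs --- we obtain, provided $\kappa = \kappa(H,r)$ is large enough, an injection $\rho\colon V(H) \to [\kappa]$ and a colour $i_0$ such that for every edge $v_a v_b \in E(H)$ and every copy of $H$ in our family, the monochromatic $K_{t,t}$ recorded in the previous step for that copy, that edge, and the block pair $(\rho(a),\rho(b))$ --- a $K_{t,t}$ between block $\rho(a)$ of $V_{v_a}$ and block $\rho(b)$ of $V_{v_b}$ --- has colour exactly $i_0$.

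It remains to align, for at least one copy in the family, these $e(H)$ separate monochromatic $K_{t,t}$'s into a single canonical monochromatic copy of $H[t]$: at each vertex $v_a$ of $H$ the $\deg_H(v_a)$ incident edges a priori use different $t$-subsets of block $\rho(a)$ of $V_{v_a}$, and we must reconcile them to a single $t$-subset. This alignment --- which can genuinely fail for an individual copy of $H$, and is exactly the obstruction behind \Cref{conj:upper} --- is the step I expect to be the real work. The plan is to carry out the alignment of \Cref{thm:upper_bound_blow} across the whole family at once: process the vertices of $H$ in a fixed order and, at $v_a$, discard the copies admitting no $t$-subset of block $\rho(a)$ of $V_{v_a}$ compatible with the (boundedly many) constraints imposed by the already-processed neighbours of $v_a$; the key estimate to prove is that each such round discards at most a $(1-\eta)$-fraction of the surviving copies, with $\eta = \eta(H,r) > 0$ independent of $G$. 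Since $v(H)$ is constant and the initial family has size $\Omega(n^{v(H)} p^{e(H)}) \to \infty$, some copy survives all $v(H)$ rounds and yields a canonical monochromatic copy of $H[t]$, that is, $G[c^t] \barrows{r} H[t]$. One cannot invoke \Cref{thm:upper_bound_blow} as a black box here, since there the analogous discard is paid once per edge of $G$ and hence depends on $e(G)$; the role of the random graph --- its local sparsity together with the supersaturation from the first step --- is precisely to keep the relevant configurations, and with them the loss $\eta$, bounded in terms of $H$ and $r$ alone. Finally choose $\kappa = \kappa(H,r)$ large enough for the third step and then $c = c(H,r)$ large enough for the first two.
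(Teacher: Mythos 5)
Your first step is essentially the paper's \Cref{thm:random_multiplicity}: a container-based supersaturation statement showing that w.h.p.\ every $r$-colouring of $E(\Gnp{n}{p})$ has at least $\gamma\, n^{v(H)}p^{e(H)}$ monochromatic copies of $H$, which (dividing by the concentrated subgraph count $\Mult{1}{H}{\Gnp{n}{p}} \leq 2n^{v(H)}p^{e(H)}$) says $\RamRob{r}{H}{\Gnp{n}{p}} \geq \eta(H,r)$ with high probability. That is the right and, in fact, the only substantive ingredient. But you then decline to use \Cref{thm:upper_bound_blow} as a black box on the grounds that its cost ``is paid once per edge of $G$ and hence depends on $e(G)$.'' This describes the naive iterated-bipartite-Ramsey argument from the introduction (a tower of height $e(G)$), not the actual proof. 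The quantified version, \Cref{thm:upper_bound_blow_bigger}, is proved via Nikiforov's theorem and gives
\begin{equation*}
  c = \exp\left(\frac{r^{v(H)}\,4^{v(H)^2-v(H)}}{\RamRob{r}{H}{G}^{v(H)}}\right),
\end{equation*}
which depends on $G$ \emph{only} through the robustness. Combined with your supersaturation step, this immediately yields $c = c(H,\eta) = c(H,r)$; that two-line deduction is the paper's entire proof of \Cref{thm:random_graph_blow}.

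Having declined the black box, your alternative route has a genuine gap, which you yourself flag: the ``alignment'' round, in which the $\deg_H(v_a)$ monochromatic $K_{t,t}$'s incident to a vertex of $H$ must be reconciled to a common $t$-subset, and for which the ``key estimate'' (each round discards at most a $(1-\eta)$-fraction of surviving copies, with $\eta$ independent of $G$) is asserted but not proved. This is not a routine verification --- a single copy of $H$ can fail alignment outright, as you note, and controlling the loss across the family is exactly the difficulty that Nikiforov's blowup theorem is imported to resolve in \Cref{sec:upper_bound}. As written, the proposal establishes the correct first half and leaves the second half, which is the crux of your chosen route, as an unproven claim.
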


The main tool used in the proof of \Cref{thm:upper_bound_blow}
is a powerful theorem of Nikiforov~\cite{nikiforov2008blowup_cliques}
concerning blowups (see \Cref{thm:nikiforov}).
In \Cref{sec:upper_bound} we state an improvement of his theorem
under an additional assumption,
which allows us to obtain a better constant $c$ in \Cref{thm:upper_bound_blow}.
Moreover, Nikiforov's theorem implies that one of the parts
of the monochromatic blowup of $H$
can be of size exponential in $t$,
see \Cref{thm:upper_bound_blow_bigger}.

We prove \Cref{thm:random_graph_blow} in \Cref{sec:random_graphs},
using the hypergraph container method of
Balogh, Morris and Samotij~\cite{balogh_morris_samotij2015containers}
and Saxton and Thomason~\cite{saxton_thomason2015containers}.
More specifically,
we use a container theorem for sparse sets in $H$-free graphs,
stated explicitly by Saxton and Thomason.

In \Cref{sec:conj},
we propose some conjectures and open problems,
including the aforementioned \Cref{conj:upper}.
We also examine a family of minimal graphs
with the property that {$G \arrows{r} H$},
but for which the bound given by our proof of \Cref{thm:upper_bound_blow}
is not strong enough to deduce \Cref{conj:upper} in this case.
The construction of this family,
due to Burr, Erd\H{o}s and Lovász~\cite{burr_erdos_lovasz1976ramsey_type},
uses the so-called signal senders.


\section{Upper Bound}
\label{sec:upper_bound}

In this Section,
we prove a quantified version of \Cref{thm:upper_bound_blow}.
Before stating this result precisely,
we introduce some concepts and some notation.

Let $G$ be a graph on $n$ vertices.
Given $t_1, \dotsc, t_n$ positive integers,
we define the $(t_1,\dotsc,t_n)$-\emph{blowup} of $G$,
denote by $G[t_1,\dotsc,t_n]$,
as the graph obtained from $G$ by replacing each vertex $v_i$
with an independent set $U_i$ of $t_i$ vertices.
For every edge $v_i v_j$ in $G$,
we put a complete bipartite graph between $U_i$ and $U_j$.
The $t$-blowup of $G$ is the graph $G[t] = G[t,\dotsc,t]$.

For graphs $G$ and $H$,
we define the $r$-\emph{multiplicity} of $H$ in $G$
as the minimum number of monochromatic copies of $H$
over all $r$-colourings of the edges of $G$,
and denote that quantity by $\Mult{r}{H}{G}$.
Note that the statement that {$G \arrows{r} H$}
is then equivalent to the statement that $\Mult{r}{H}{G} \geq 1$.
Also, note that $\Mult{1}{H}{G}$ is the number of copies of $H$ in $G$.
We call the ratio
\begin{equation*}
  \RamRob{r}{H}{G} \defined \frac{\Mult{r}{H}{G}}{\Mult{1}{H}{G}}
\end{equation*}
the \emph{Ramsey $r$-robustness} of $H$ in $G$.
Thus, $\RamRob{r}{H}{G}$ is the minimum proportion
of monochromatic copies of $H$ in $G$
that we can guarantee that appears
in any $r$-colouring of the edges of $G$.
Note that $\RamRob{r}{H}{G} > 0$
if, and only if, {$G \arrows{r} H$}.
We prove the following theorem,
which implies \Cref{thm:upper_bound_blow},
and which we also use to prove \Cref{thm:random_graph_blow}.

\begin{theorem}
\label{thm:upper_bound_blow_bigger}
If {$G \arrows{r} H$},
then {$G[c^t] \barrows{r} H[t]$},
where $c$ is given by
\begin{equation}
\label{eq:upper_bound_constant}
  c = \exp \left(
    \frac{ r^{v(H)} 4^{v(H)^2 - v(H)} }
      { \RamRob{r}{H}{G}^{v(H)} }
    \right).
\end{equation}
\end{theorem}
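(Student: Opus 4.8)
The plan is to reduce the statement to finding a canonical copy of $H[t]$ inside a blowup $G[n]$ by first passing to a canonical blowup of $G$ and then invoking Nikiforov's blowup theorem (\Cref{thm:nikiforov}) inside the dense monochromatic colour class. Concretely, fix an $r$-colouring of the edges of $G[n]$ with $n = c^t$. My first step is to locate a \emph{canonical} copy of $G[m]$ inside $G[n]$ whose induced colouring is itself the blowup of a colouring of $G$: this is achieved by iterating the bipartite Ramsey theorem across the $e(G)$ pairs of vertex classes corresponding to edges of $G$, each time restricting to the monochromatic part on that pair, exactly as in the tower-bound argument from \Cref{sec:intro}. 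Crucially, I must keep track of how the $m$ compares to $n$; since I only need $m$ polynomial (indeed, of the form $n^{1/e(G)}$ roughly, or better using the improved bipartite Ramsey bound), the loss here is absorbed into the final constant and does \emph{not} contribute a second exponential. The upshot is a colouring $\chi$ of $E(G)$, lifted to $G[m]$, together with a large $m$.

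Next I apply $G \arrows{r} H$ to the colouring $\chi$ of $E(G)$: not merely to find one monochromatic copy of $H$, but — and this is where $\RamRob{r}{H}{G}$ enters — to find at least $\RamRob{r}{H}{G} \cdot \Mult{1}{H}{G}$ monochromatic copies of $H$ in $G$, all of the same colour after pigeonholing over the $r$ colours, so at least $(1/r)\,\RamRob{r}{H}{G}\,\Mult{1}{H}{G}$ copies in (say) colour $1$. Blowing these up, $G[m]$ contains a spanning blowup structure in which a positive density (bounded below in terms of $\RamRob{r}{H}{G}$ and $v(H)$) of the potential canonical copies of $H[?]$ sit inside the colour-$1$ graph. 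Now I want to find a single large canonical monochromatic $H[t]$. For this I consider the colour-$1$ subgraph of $G[m]$ restricted to the $v(H)$ vertex classes of one monochromatic $H$; but since many monochromatic $H$'s overlap, the right move is to apply Nikiforov's theorem to the colour-$1$ graph on $G[m]$ directly: the density of colour $1$ is at least $(1/r)\RamRob{r}{H}{G}$ (roughly — I will need the density \emph{relative to $G[m]$} and the fact that $G$-edges are complete bipartite in the blowup), and Nikiforov guarantees a canonical blowup of a clique, hence of any fixed graph on that many vertices, of size logarithmic in $m$, i.e. of size $\Omega_{H,r}(t)$ once $m$ is exponential in $t$. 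Choosing constants so that this $\Omega(t)$ exceeds $t$ gives the canonical monochromatic $H[t]$.

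The bookkeeping that produces precisely the constant in \eqref{eq:upper_bound_constant} is the delicate part. Tracing Nikiforov's bound, a $K_{v(H)}$-blowup of part-size $\ell$ appears in any graph on $N$ vertices with edge density $\delta$ provided $N \geq (1/\delta)^{c' \ell}$ for an appropriate exponent; unwinding, to get $\ell = t$ one needs $N = m \geq \delta^{-O(t)}$, and with $\delta \asymp \RamRob{r}{H}{G}/r$ and the first-stage loss $m \asymp n^{1/e(G)}$ this forces $n \geq \exp(O(t) \cdot \log(1/\delta))$, whose exponential rate is governed by $r^{v(H)} 4^{v(H)^2 - v(H)} / \RamRob{r}{H}{G}^{v(H)}$ after the improved version of Nikiforov's theorem (\Cref{thm:nikiforov} as sharpened in this section) is plugged in with the correct dependence of the exponent on $v(H)$. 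I expect the main obstacle to be exactly this: extracting the \emph{clean} form of the constant, i.e. showing the first-stage bipartite-Ramsey iteration and the pigeonhole over colours contribute only lower-order factors to the exponent, so that the $v(H)$-th power of $\RamRob{r}{H}{G}$ in the denominator and the doubly-exponential-in-$v(H)$ numerator are genuinely what the improved blowup lemma yields, and not an artifact of a lossy intermediate step. A secondary subtlety is ensuring the blowup produced by Nikiforov is \emph{canonical} with respect to $G$ — this is automatic because after the first stage the colouring on the relevant classes is literally a blowup of $\chi$, so any sub-blowup inherits canonicity.
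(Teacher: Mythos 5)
There is a genuine gap, and it sits in your very first step. You propose to pass from an arbitrary colouring of $G[n]$ to a canonical $G[m]$ on which the colouring is a blowup of a colouring of $G$, by iterating the bipartite Ramsey theorem over the $e(G)$ pairs of classes, and you assert that this costs only a polynomial factor, $m \approx n^{1/e(G)}$. That is false: a single application of the bipartite Ramsey theorem to an $r$-coloured $K_{n,n}$ yields a monochromatic $K_{m,m}$ only with $m = \Theta(\log n)$, since $\BipRamN{2}{m} \geq 2^{m/2}$. Iterating over the $e(G)$ pairs therefore leaves you with $m$ of order an $e(G)$-fold iterated logarithm of $n$, which is exactly the tower-type bound described (and explicitly improved upon) in \Cref{sec:intro}. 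No choice of constant $c$ in $n = c^t$ can recover a part of size $t$ from this, and your subsequent invocation of Nikiforov's theorem cannot repair it, because you would be applying it inside a $G[m]$ with $m$ iterated-logarithmic in $n$ (moreover, if the regularisation really did succeed, Nikiforov would be redundant: a blowup of a colouring of $G$ with {$G \arrows{r} H$} already contains a monochromatic canonical $H[m]$ outright). A secondary problem is that you feed Nikiforov's theorem the \emph{edge density} of a colour class; \Cref{thm:nikiforov} and its canonical variant \Cref{thm:nikiforov_blowup} require many \emph{copies} of $H$ (of order $\rho n^{v(H)}$), and for general $H$ a dense subgraph of $H[n]$ need not contain many canonical copies of $H$.

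The paper's proof performs no regularisation at all. Working directly with the given colouring of $G[n]$, it averages over the $n^{v(G)}$ canonical copies of $G$, each of which contains at least $\Mult{r}{H}{G}$ monochromatic canonical copies of $H$; correcting for overcounting gives at least $\Mult{r}{H}{G}\, n^{v(H)}$ monochromatic canonical copies of $H$ in $G[n]$. Pigeonholing first on the colour and then on which of the $\Mult{1}{H}{G}$ copies of $H$ in $G$ each one sits over, one finds a single copy $H'$ of $H$ in $G$ and a colour $i$ such that the colour-$i$ subgraph of $H'[n]$ contains $\rho n^{v(H)}$ canonical copies of $H$ with $\rho = r^{-1}\RamRob{r}{H}{G}$. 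Only then is \Cref{thm:nikiforov_blowup} applied, producing the monochromatic $H[t,\dotsc,t,n^{1-\rho^{v(H)-1}}]$ and the constant in \eqref{eq:upper_bound_constant}. Your intuition that $\RamRob{r}{H}{G}$ should enter through a counting and pigeonhole step is right, but it must be applied to the raw colouring of $G[n]$, not after a regularisation step that has already destroyed the single-exponential bound.
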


Actually,
we are going to prove the stronger statement that
{$G[c^t] \barrows{r} H[t,\dotsc,t,c_0^t]$},
where
\begin{equation*}
c_0 = c^{1-(r^{-1} \RamRob{r}{H}{G})^{v(H)-1}} .
\end{equation*}
This is a strengthening of \Cref{thm:upper_bound_blow},
since $c_0^t \geq t$.
Note that,
via a relabelling of the vertices of $H$,
we can choose which vertex class receives the larger part.

The main ingredient in the proof of the upper bound
is a variant of the following beautiful theorem of
Nikiforov~\cite{nikiforov2008blowup_cliques,nikiforov2008blowup_general}.

\begin{theorem}
\label{thm:nikiforov}
Let $H$ be a graph with $k \geq 2$ vertices.
Let $G$ be a graph with $n$ vertices
and $\rho < 1/4$.
If $G$ contains at least $\rho n^k$ copies of $H$,
then $G$ contains a copy of the blowup
$H[t,\dotsc,t,n^{1-\rho^{k-1}}]$,
where $t = \lfloor \rho^{k^2} \log n\rfloor$. \qed
\end{theorem}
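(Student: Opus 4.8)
The plan is to prove the theorem by induction on $k = v(H)$, using the Kővári--Sós--Turán theorem for the base case and a dependent random choice argument (equivalently, an iterated Kővári--Sós--Turán argument applied inside neighbourhoods) for the inductive step. We may assume $\rho^{k^2}\log n \geq 1$, since otherwise $t = 0$ and there is nothing to prove; in particular $n$ is large compared with $\rho^{-1}$. Also, every copy of $H$ is determined by the image of one fixed edge of $H$ together with the placement of the remaining $k-2$ vertices, so $G$ has $\Omega(\rho n^2)$ edges and is therefore dense, leaving ample room for the arguments below. We note that $\rho^{k^2}$ is far from optimal, which leaves considerable slack in all the estimates.

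For $k = 2$ the graph $H$ is either edgeless or an edge. If $H$ is edgeless then $H[t,s]$ is an independent set on $t + s < n$ vertices and the conclusion is immediate. If $H = K_2$, then $G$ has at least $\rho n^2$ copies of $K_2$, hence at least $\tfrac{\rho}{2} n^2$ edges, and the Kővári--Sós--Turán theorem yields a copy of $K_{t,s}$ with $t = \lfloor\rho^4\log n\rfloor$ and $s = \lceil n^{1-\rho}\rceil$; the one inequality that has to be checked reduces to $\rho^3\log(1/\rho) \leq 1$, which holds for every $\rho < 1/4$.

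For the inductive step, fix a vertex $v_k$ of $H$, put $H' = H - v_k$ and $D = N_H(v_k) \subseteq V(H')$. A copy of $H$ is precisely a copy of $H'$ together with a common neighbour of the image of $D$, so averaging over the at least $\rho n^k$ copies of $H$ shows that $G$ has at least $\rho n^{k-1}$ copies of $H'$ and that at least $\tfrac{\rho}{2}n$ vertices $w$ are \emph{heavy}, meaning that at least $\tfrac{\rho}{2}n^{k-1}$ copies of $H'$ have their $D$-part inside $N_G(w)$. Dependent random choice now produces a set $U \subseteq V(G)$ with $|U| \geq n^{1-\varepsilon}$, for some $\varepsilon \leq \rho^{k-1}/k$ depending only on $k$ and $\rho$, such that every subset of $U$ of size at most $(k-1)t$ has at least $n^{1-\rho^{k-1}}$ common neighbours in $G$, and such that $G[U]$ still captures at least $\rho^* n^{k-1}$ copies of $H'$ whose $D$-part lies in $U$, where $\rho^* = \rho^{O(1)}$. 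Applying the inductive hypothesis --- in a rooted form, with the $D$-part required to lie in $U$ --- to $H'$ gives disjoint sets $A_1, \dots, A_{k-1}$, each of size at least $\lfloor(\rho^*)^{(k-1)^2}\log|U|\rfloor \geq t$, forming a blowup of $H'$ with $A_i \subseteq U$ for all $i \in D$. Since $\bigcup_{i \in D} A_i$ is a subset of $U$ of size at most $(k-1)t$, it has at least $n^{1-\rho^{k-1}}$ common neighbours; taking $A_k$ to be $\lceil n^{1-\rho^{k-1}}\rceil$ of these that avoid $A_1 \cup \dots \cup A_{k-1}$ completes a copy of $H[t,\dots,t,n^{1-\rho^{k-1}}]$ after the relabelling that puts $v_k$'s class last.

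The heart of the matter --- and the main obstacle --- is the dependent random choice step together with the formulation of the inductive hypothesis. A plain dependent random choice produces a $U$ with the common-neighbourhood property but gives no control over how many copies of $H'$ survive inside it; and for general $H$ the heavy-vertex structure only pins down the $D$-part of a copy of $H'$, not the whole copy (when $v_k$ is a dominating vertex of $H$ one has $D = V(H')$ and a copy-preserving version of dependent random choice applies at once, but graphs like $C_5$ admit no such vertex). Consequently the statement one must induct on is the rooted one above, where copies of $H'$ are only required to meet $U$ in their $D$-vertices and the output blowup is only required to have its $D$-classes in $U$; one then has to track how this distinguished set changes as vertices are peeled, and run a weighted, copy-counting variant of dependent random choice at each step. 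Choosing the heaviness threshold, the number of vertices sampled, the bound on $\varepsilon$, and the vertex $v_k$ so that over the at most $k-2$ iterations the density exponent loses only $\rho^{O(1)}$ and $|U|$ loses only a factor $n^{-\rho^{k-1}/k}$ per step --- using $\rho < 1/4$ throughout --- is the delicate part, but it is comfortably feasible given the slack in the target exponent $\rho^{k^2}$.
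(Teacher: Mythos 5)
Your base case is fine and the inductive skeleton is reasonable, but the proof is not complete: the entire inductive step rests on a ``weighted, copy-counting variant of dependent random choice'' applied to a ``rooted form'' of the theorem, and neither the rooted statement nor that variant is ever formulated, let alone proved. You correctly identify this as ``the heart of the matter'' and then dispose of it with the assertion that it is ``comfortably feasible given the slack in the target exponent $\rho^{k^2}$''. That assertion is where the proof lives. For comparison, the argument the paper gives (in the appendix, for the canonical variant \Cref{thm:nikiforov_blowup}, following Nikiforov's original) avoids dependent random choice entirely: one first prunes the family of copies of $H$ so that every surviving copy of $H-v$ extends through at least $(\rho/2)n$ vertices of the class of $v$, applies the induction hypothesis to the surviving copies of $H-v$, and only then applies a single K\H{o}v\'ari--S\'os--Tur\'an-type statement (\Cref{lem:nikiforov}) to the \emph{auxiliary} bipartite graph whose parts are the disjoint copies of $H-v$ inside the resulting blowup and the vertex class of $v$. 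This produces the large last class directly, no common-neighbourhood control over an ambient set $U$ is needed, and the density loss per level is only a multiplicative factor of $2$.

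The quantitative point is that the slack you invoke does not exist. If the induction hypothesis at level $k-1$ is applied with density $\rho^*$ on a ground set $U$ with $|U|\ge n^{1-\varepsilon}$, it returns classes of size $\lfloor(\rho^*)^{(k-1)^2}\log|U|\rfloor$, and you need this to be at least $\lfloor\rho^{k^2}\log n\rfloor$; up to the lower-order correction from $\log|U|$ versus $\log n$, this forces $\rho^*\ge\rho^{k^2/(k-1)^2}$. Since $k^2/(k-1)^2=(1+\tfrac{1}{k-1})^2\to 1$, a density loss of the form $\rho\mapsto\rho^{C}$ for any fixed $C>1$ is fatal for all large $k$; already $\rho^*=\rho^{2}$ fails for $k\ge 4$, because $\rho^{2(k-1)^2}<\rho^{k^2}$ there. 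But polynomial losses in the density are exactly what the averaging and convexity steps inside a copy-counting dependent random choice argument produce. Only constant multiplicative losses, such as the $\rho\mapsto\rho/2$ of the pruning step, are compatible with exponents like $\rho^{k^2}$ or $\rho^k4^{-k^2+k}$, which are calibrated for precisely that. So the parameter accounting you defer is not a routine verification: with $\rho^*=\rho^{O(1)}$ as stated, the induction does not close.
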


\Cref{thm:nikiforov} was used by
Nikiforov and Rousseau~\cite{nikiforov_rousseau2009goodness}
to resolve several problems of
Burr and Erd\H{o}s~\cite{burr_erdos1983generalizations}
about Ramsey goodness.
This theorem is central to our result,
and we could apply it without any modifications.
Indeed, by applying \Cref{thm:nikiforov},
we would obtain the following constant
$c$ in \Cref{thm:upper_bound_blow_bigger}:
\begin{equation*}
  c = \exp \left(
    \frac{r^{v(H)}v(H)^{v(H)^2}}
      {\RamRob{r}{H}{G}^{v(H)}}
    \right).
\end{equation*}
We can obtain the constant $c$
given in \Cref{thm:upper_bound_blow_bigger}
by applying the following variant of Nikiforov's theorem,
\Cref{thm:nikiforov}.

\begin{theorem}
\label{thm:nikiforov_blowup}
Let $H$ be a graph with $k \geq 2$ vertices.
If $G$ is a subgraph of $H[n]$
with $\rho n^k$ canonical copies of $H$,
then it has a canonical copy of
$H[t,\dotsc,t,n^{1-\rho^{k-1}}]$
where $t = \lfloor \rho^{k} 4^{-k^2+k} \log n \rfloor$.
\end{theorem}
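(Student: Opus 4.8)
The plan is to follow the structure of Nikiforov's original argument, but to exploit the extra hypothesis that $G$ is already a subgraph of $H[n]$ so that we only ever need to track \emph{canonical} copies, which lets us replace a step that loses a factor like $v(H)^{v(H)^2}$ by one that only loses $4^{-k^2+k}$. Write $V(H) = \{w_1,\dots,w_k\}$ and let $U_1,\dots,U_k$ be the vertex classes of $H[n]$, each of size $n$. A canonical copy of $H$ in $G$ is a choice of one vertex $u_i \in U_i$ for each $i$, such that every edge $w_iw_j \in E(H)$ is realised by $u_iu_j \in E(G)$ (there is no constraint coming from non-edges of $H$, since those bipartite classes are empty in $H[n]$). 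Since $G$ has $\rho n^k$ canonical copies, by averaging there is a vertex $u_k^\star \in U_k$ lying in at least $\rho n^{k-1}$ canonical copies; restricting to the neighbourhoods of $u_k^\star$ inside the relevant classes, we obtain a subgraph $G'$ of $H'[n]$, where $H' = H - w_k$, with at least $\rho n^{k-1}$ canonical copies of $H'$, and moreover the common neighbourhood $W$ of everything we will eventually find still has $u_k^\star$ available — more carefully, one peels off the last class first and keeps $u_k^\star$ as the seed of the large part.

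The core is a dependent random choice / iterated neighbourhood argument. The idea is to build the $t$ vertices in each of the first $k-1$ classes greedily: having already selected partial transversals, pick the next batch of vertices from within the common neighbourhoods forced by the edges of $H$, using at each step that a positive density of choices keeps the count of extending canonical copies of $H$ large. Concretely, one shows by induction on $s = 0,1,\dots,t$ that there are sets $A_i^{(s)} \subseteq U_i$ with $|A_i^{(s)}| = s$ for $i < k$, forming a canonical copy of $H'[s]$ in $G$, and a "reservoir" set $R^{(s)} \subseteq U_k$ with $|R^{(s)}| \ge n^{1-\rho^{k-1}} \cdot (\text{something} \ge 1)$ such that every vertex of $R^{(s)}$, together with $A_1^{(s)},\dots,A_{k-1}^{(s)}$, completes to a canonical copy of $H[s,\dots,s,1]$. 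The inductive step uses the density hypothesis to find, in each class $i<k$, a new vertex whose neighbourhood does not shrink the reservoir or the counts by more than a controlled factor; this is exactly where $\rho^k$ (one factor of $\rho$ per class, essentially) and the $\log n$ bound on $t$ enter, and where the $4^{-k^2+k}$ rather than $v(H)^{v(H)^2}$ improvement comes from, because every pair of classes we condition on genuinely carries a complete bipartite graph in $H[n]$, so the relevant inclusion–exclusion over subsets of the $\binom{k}{2}$ pairs costs only $4^{\binom{k}{2}}$-type factors instead of factors polynomial in the number of vertices of $H$. At the end, $R^{(t)}$ has size at least $n^{1-\rho^{k-1}}$ and, together with $A_1^{(t)},\dots,A_{k-1}^{(t)}$, gives the required canonical copy of $H[t,\dots,t,n^{1-\rho^{k-1}}]$.

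The main obstacle I expect is bookkeeping the density losses tightly enough to land on the stated value $t = \lfloor \rho^{k} 4^{-k^2+k}\log n\rfloor$ and the stated reservoir size $n^{1-\rho^{k-1}}$ simultaneously: a naive iteration gives a bound of the right shape but with worse constants, and one has to be careful to (i) condition on classes in a good order so that the bipartite density between already-chosen and not-yet-chosen classes is used only once per pair, (ii) keep the "large part" reservoir separate from the greedy process so its density exponent degrades by exactly $\rho^{k-1}$ rather than something larger, and (iii) verify that after $t$ steps the multiplicative loss $(\text{const})^t$ is absorbed by choosing $t$ logarithmic with the precise constant $\rho^k 4^{-k^2+k}$. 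A secondary point to handle cleanly is the base case and the requirement $n$ large enough that all floors and the inequality $n^{1-\rho^{k-1}} \ge 1$ are non-vacuous; for small $n$ the statement is trivial since $t = 0$ and a single canonical copy of $H$ suffices.
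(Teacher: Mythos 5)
Your proposal is a plan rather than a proof, and the plan has a genuine gap at its quantitative core. The engine you describe is a greedy, one-vertex-at-a-time iterated-neighbourhood argument: maintain partial transversals $A_i^{(s)}$ and a reservoir $R^{(s)}$, and at each step pick a new vertex ``whose neighbourhood does not shrink the reservoir or the counts by more than a controlled factor.'' The step that would fail is precisely this one. After you restrict to the common neighbourhood of the vertices chosen so far, the edge density between the \emph{unchosen} vertices of a class and the shrunken reservoir is no longer controlled by the original density $\rho$: the edges of $G$ between two classes could be arranged so that, once you pass to $N(a_1)$, almost no other vertex of the class has many neighbours there. So the inductive invariant ``a positive density of choices keeps the count of extending canonical copies large'' is not maintained by anything you have said, and without it the multiplicative loss per step is not $(\rho/2)$ or any constant, and the final reservoir size $n^{1-\rho^{k-1}}$ does not follow. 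This is exactly the difficulty that the standard proofs avoid by \emph{not} choosing the $t$ vertices greedily: the paper's Lemma A.1 finds all $t$ vertices of the small side at once by a K\H{o}v\'ari--S\'os--Tur\'an-type double count, comparing $\sum_{v\in B}\binom{d(v)}{s}$ (bounded below by convexity of $x\mapsto\binom{x}{s}$) with $t\binom{m}{s}$, which yields $t\ge n(\rho/4)^s$ and hence the exponent $1-\rho^{k-1}$ after the arithmetic. You would need to either prove such a counting lemma or replace your greedy step with a dependent-random-choice argument that averages over the choice of the conditioning set; as written, neither is present.

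Two further missing ingredients. First, the paper's induction is on $k=v(H)$, and before applying it one must \emph{clean} the family of canonical copies so that every surviving copy of $H-v$ has at least $(\rho/2)n$ extensions to a copy of $H$; this deletion argument (remove copies extending a poorly-extendable $H-v$ until none remain, losing at most half the family) is what makes the auxiliary bipartite graph between copies of $H-v$ and the class of $v$ dense enough to feed into the bipartite lemma. Your averaging over a single vertex $u_k^\star$ does not serve the same purpose and, in particular, does not give you a \emph{blowup} of $H-v$ to attach the last class to. Second, your explanation of the constant $4^{-k^2+k}$ (``inclusion--exclusion over subsets of the $\binom{k}{2}$ pairs costs only $4^{\binom{k}{2}}$-type factors'') is an assertion, not a derivation; in the actual proof this constant emerges from propagating the bound $t\ge n(\rho/4)^s$ through the induction on $k$, via the inequality $-k^2+k+1+(k-1)/2\le -(k-1)^2+k-1$. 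You correctly identify all of this as ``the main obstacle,'' which is an acknowledgement that the proof is not yet there.
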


The proof of \Cref{thm:nikiforov_blowup}
is very similar to the original proof of Nikiforov,
and is therefore postponed to \Cref{sec:appendix}.
We now deduce \Cref{thm:upper_bound_blow_bigger}:

\begin{proof}[Proof of Theorem~\ref{thm:upper_bound_blow_bigger}]
Let $G$ and $H$ be graphs with {$G \arrows{r} H$}.
Consider an $r$-colouring of the edges of $G[n]$.
Note that there are $n^{v(G)}$ canonical copies of $G$ in $G[n]$.
Each one of these copies has $\Mult{r}{H}{G} \geq 1$
canonical monochromatic copies of $H$.
But a canonical copy of $H$ can appear in $n^{v(G) - v(H)}$
distinct copies of $G$ in $G[n]$.
Therefore the number of distinct canonical monochromatic
copies of $H$ in $G[n]$ is at least
\begin{equation*}
  \frac{ n^{v(G)}\Mult{r}{H}{G} }{ n^{v(G) - v(H)} }
    = \Mult{r}{H}{G} n^{v(H)}.
\end{equation*}

Thus, there is a colour $i \in [r]$ such that
there are at least $n^{v(H)}\Mult{r}{H}{G}/r$
copies of $H$ in colour $i$.
Furthermore, every canonical copy of $H$ in $G[n]$
correspond to a copy of $H$ in $G$.
There are $\Mult{1}{H}{G}$ copies of $H$ in $G$,
and hence, there are
\begin{equation*}
  L \defined
    \left( \frac{\Mult{r}{H}{G}}{r\Mult{1}{H}{G}} \right) n^{v(H)}
    = r^{-1}\RamRob{r}{H}{G} n^{v(H)}
\end{equation*}
canonical copies of $H$ in $G[n]$,
of colour $i$,
all corresponding to the same copy of $H$ in $G$.
In other words,
there is some copy $H'$ of $H$ in $G$ such that
there are $L$ canonical copies of $H$ of colour $i$ in $H'[n]$.

Let $H'[n]_{(i)}$ be the subgraph of $H'[n]$
of the edges of colour $i$.
We apply \Cref{thm:nikiforov_blowup} to $H'[n]_{(i)}$
with $\rho = r^{-1}\RamRob{r}{H}{G}$
to obtain a copy of
$H[t,\dotsc,t,n^{1-\rho^{v(H)-1}}]$
of colour $i$ in $G[n]$,
where
\begin{equation*}
  t = r^{-v(H)}\RamRob{r}{H}{G}^{v(H)} 4^{-v(H)^2+v(H)} \log n.
\end{equation*}

Consequently,
if we take
$c = c(G,H,r)
  = \exp (r^{v(H)} \RamRob{r}{H}{G}^{-v(H)} 4^{v(H)^2 - v(H)} )$,
we have that {$G[c^t] \barrows{r} H[t,\dotsc,t,c_0^t]$},
for $c_0 = c^{1-(r^{-1} \RamRob{r}{H}{G})^{v(H)-1}}$.
\end{proof}

We point out that whichever version of Nikiforov's theorem is used,
we actually can find a blowup of $H$
with one of the parts of size polynomial in $v(G)$,
instead of logarithmic.
This stronger conclusion is not needed
if we just want to find a monochromatic copy of $H[t]$ in $G[n]$,
but we get a larger part for free by applying this result.
This asymmetric phenomenon seems to appear naturally in extremal
and Ramsey questions in blowup ambient graphs.


\section{Lower Bound}
\label{sec:lower_bound}

In this Section,
we set out to prove the following theorem.

\begin{theorem}
\label{thm:lower_bound_blow}
For $r \geq 2$
and graphs $G$ and $H$,
we have
\begin{equation*}
  \BlowRam{r}{G}{H}{t}
    \geq \bigl(1 + o(1)\bigr)
      \bigl( r^{d(H)/v(H)} \euler^{-1} \bigr) \,t\, r^{d(H)t/2}.
\end{equation*}
\end{theorem}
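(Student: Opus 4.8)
The plan is to prove \Cref{thm:lower_bound_blow} via the Lov\'asz Local Lemma (LLL), producing for $n$ just below the claimed bound an $r$-colouring of $E(G[n])$ with no canonical monochromatic copy of $H[t]$. First I would fix $n$ and colour each edge of $G[n]$ independently and uniformly at random from $[r]$. For a fixed canonical copy $\tilde H$ of $H[t]$ in $G[n]$, let $A_{\tilde H}$ be the event that $\tilde H$ is monochromatic; since $\tilde H$ has $t^2 e(H)$ edges, $\prob(A_{\tilde H}) = r^{1 - t^2 e(H)} = r \cdot r^{-t^2 e(H)}$. The dependency structure is the usual one: $A_{\tilde H}$ is mutually independent of all $A_{\tilde H'}$ for which $\tilde H'$ shares no edge with $\tilde H$. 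So I need a good upper bound $D$ on the number of canonical copies of $H[t]$ that share at least one edge with a given $\tilde H$, and then invoke the symmetric LLL in the form $\euler \, p \,(D+1) \le 1$.

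The heart of the argument is bounding $D$ and extracting the right exponential rate. A canonical copy of $H[t]$ in $G[n]$ is determined by choosing a copy $H'$ of $H$ in $G$ (at most $v(G)^{v(H)}$ choices, a constant) and then, for each vertex $u$ of $H'$, a $t$-subset of the corresponding class $U_u$ of size $n$. Two canonical copies $\tilde H, \tilde H'$ on possibly different base copies $H', H''$ share an edge only if $H'$ and $H''$ overlap in at least one edge $uv$, and the chosen $t$-sets in $U_u$ (resp.\ $U_v$) overlap. The dominant contribution to $D$ comes from counting, over base copies $H''$ sharing a fixed edge $uv$ with $H'$, the number of ways to pick the $t$-sets: in the two shared classes we must reuse at least one element already used by $\tilde H$, and in the remaining $v(H) - 2$ classes of $H''$ we are essentially free, giving roughly $v(H) \cdot t \cdot n^{t-1} \cdot \binom{n}{t}^{v(H)-2} \cdot \binom{n}{t} \le (\text{const}) \cdot t \cdot n^{(v(H)-1)t}$ up to lower-order factors; more carefully one obtains $D \le (1+o(1)) \, d(H)\, t \, n^{v(H)t - t} \big/ (t!)^{\,v(H)-1}$ type expressions after accounting for the unordered nature of the $t$-sets. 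Plugging $p = r^{-t^2 e(H)+1}$ and this $D$ into $\euler p (D+1) \le 1$ and solving for the largest admissible $n$, the exponents of $n$ and $r$ match up as $t^2 e(H) = \tfrac{1}{2} d(H) v(H) t \cdot t$, and one reads off $n \le (1+o(1)) (r^{d(H)/v(H)} \euler^{-1}) \, t \, r^{d(H)t/2}$, which is exactly the stated bound. By LLL, with positive probability none of the $A_{\tilde H}$ occur, so such $n$ is a lower bound for $\BlowRam{r}{G}{H}{t}$.

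I expect the main obstacle to be the careful bookkeeping in the estimate of $D$: one must correctly account for the fact that canonical copies are indexed by unordered $t$-subsets (so the relevant counts involve $\binom{n}{t}$, not $n^t$, and automorphisms of $H$ and of $H[t]$ must be handled consistently), and one must verify that the contribution from base copies $H''$ that overlap $H'$ in more than one edge is genuinely lower order and does not affect the leading constant. A secondary point is checking that the $o(1)$ error terms — coming from $\binom{n}{t} = n^t/t! \cdot (1 + O(t^2/n))$ and from the $(D+1)$ versus $D$ discrepancy — are indeed negligible for the range of $n$ and $t$ in question (in particular $n$ is exponential in $t$, so $t^2/n \to 0$). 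Once the combinatorial estimate is pinned down, the LLL application and the final optimisation over $n$ are routine. It is worth noting that specialising $G = H = K_2$ recovers the Hattingh--Henning lower bound $(1+o(1))(\sqrt 2/\euler)\, t\, 2^{t/2}$ for $\BipRamN{2}{t}$, which serves as a useful sanity check on the constant.
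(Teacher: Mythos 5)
Your proposal follows essentially the same route as the paper: colour $G[n]$ uniformly at random, apply the symmetric Lov\'asz Local Lemma to the events that a canonical copy of $H[t]$ is monochromatic, bound the dependency degree by counting canonical copies sharing an edge, and optimise $n$ via Stirling — this is exactly \Cref{prop:lower_bound_lll} specialised to equal part sizes. One caution on the bookkeeping you yourself flag: a copy sharing an edge with $\tilde H$ must reuse a prescribed vertex in \emph{each} of the two classes spanned by that edge, so the dependency degree carries a factor $t^2\binom{n-1}{t-1}^2\binom{n}{t}^{v(H)-2} = (t^4/n^2)\binom{n}{t}^{v(H)}$ per base copy and per edge of $H$; the $n^{-2}$ here (your displayed estimate only forces one endpoint and hence gives $n^{-1}$) is precisely what yields the secondary constant $r^{d(H)/v(H)}$ rather than the weaker $r^{d(H)/(2v(H))}$.
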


We recall that $d(H) = 2e(H)/v(H)$ is the average degree of $H$.
To obtain this lower bound,
we produce an $r$-colouring of the edges of $G[n]$ randomly.
For a suitable value of $n$,
we check via the Lovász Local Lemma
that the probability that it has no canonical monochromatic
copy of $H[t]$ is positive.

Let $A_1, A_2 , \dotsc , A_n$
be events in an arbitrary probability space.
A graph $D = (V, E)$
on the set of vertices $V = \{1, 2, \ldots , n\}$
is called a dependency graph for the events $A_1 , \dotsc , A_n$
if for each $i$,
$1 \leq i \leq n$,
the event $A_i$ is mutually independent of all the events
$\{ A_j : \{i,j\} \not\in E \}$.
As we want to avoid the same graph in all the colours,
the simpler symmetric version of the Lovász Local Lemma is sufficient.
We use the following version of the local lemma,
see~\cite[Corollary 5.1.2]{alon_spencer2016book}:

\begin{lemma}[Lovász Local Lemma]
\label{lemma:lll}
Suppose that $D = (V, E)$ is a dependency graph
for the events $A_1, A_2, \dotsc, A_n$.
Suppose that $D$ has degree bounded by $d$
and that $\prob (A_i) \leq p$ for all $1 \leq i \leq n$.
If $\euler p(d+1)\leq 1$,
then $\prob (\cap_{i=1}^n \comp{A_i}) > 0$.
\end{lemma}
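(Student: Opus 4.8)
The plan is to obtain this symmetric statement from the general (asymmetric) form of the local lemma, which I would establish first by induction and then specialise.

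\emph{Step 1: the general form.} I would prove that if one can choose reals $x_1,\dots,x_n \in [0,1)$ with $\prob(A_i) \le x_i \prod_{\{i,j\}\in E}(1-x_j)$ for every $i$, then $\prob\bigl(\bigcap_{i=1}^n \comp{A_i}\bigr) \ge \prod_{i=1}^n(1-x_i)$, which is strictly positive. The heart of the matter is the auxiliary claim that for every index $i$ and every $S \subseteq \{1,\dots,n\}\setminus\{i\}$,
\[
  \prob\bigl(A_i \mid \bigcap_{j\in S}\comp{A_j}\bigr) \le x_i ,
\]
which I would prove by induction on $|S|$, the base case $S=\emptyset$ being the hypothesis on $\prob(A_i)$. (Along the way one checks, again by induction on set size, that $\prob(\bigcap_{j\in T}\comp{A_j}) > 0$ for every $T$, so that all conditional probabilities appearing below are well defined; this follows from the bound $\prob(A_i \mid \bigcap_{j\in S}\comp{A_j}) \le x_i < 1$ applied to strictly smaller sets.)

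For the inductive step I would split $S = S_1 \cup S_2$ with $S_1 = \{j\in S : \{i,j\}\in E\}$ and $S_2 = S\setminus S_1$, and write
\[
  \prob\bigl(A_i \mid \bigcap_{j\in S}\comp{A_j}\bigr)
  = \frac{\prob\bigl(A_i \cap \bigcap_{j\in S_1}\comp{A_j} \mid \bigcap_{j\in S_2}\comp{A_j}\bigr)}
         {\prob\bigl(\bigcap_{j\in S_1}\comp{A_j} \mid \bigcap_{j\in S_2}\comp{A_j}\bigr)} .
\]
The numerator is at most $\prob\bigl(A_i \mid \bigcap_{j\in S_2}\comp{A_j}\bigr) = \prob(A_i) \le x_i\prod_{\{i,j\}\in E}(1-x_j)$, where the equality uses that $A_i$ is mutually independent of $\{A_j : j\in S_2\}$ since no $j\in S_2$ is a neighbour of $i$. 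For the denominator, writing $S_1 = \{j_1,\dots,j_m\}$, I would telescope it as $\prod_{\ell=1}^m \prob\bigl(\comp{A_{j_\ell}} \mid \bigcap_{t<\ell}\comp{A_{j_t}} \cap \bigcap_{j\in S_2}\comp{A_j}\bigr)$; each factor equals $1 - \prob(A_{j_\ell} \mid \cdots) \ge 1 - x_{j_\ell}$ by the induction hypothesis (each conditioning set there has fewer than $|S|$ elements), so the denominator is at least $\prod_{j\in S_1}(1-x_j) \ge \prod_{\{i,j\}\in E}(1-x_j)$, the last inequality because $S_1$ is contained in the neighbourhood of $i$ and all factors lie in $[0,1]$. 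Dividing gives $\prob(A_i \mid \bigcap_{j\in S}\comp{A_j}) \le x_i$. Feeding the auxiliary claim into the chain-rule expansion $\prob\bigl(\bigcap_{i=1}^n\comp{A_i}\bigr) = \prod_{i=1}^n\bigl(1 - \prob(A_i \mid \bigcap_{j<i}\comp{A_j})\bigr)$ then yields $\prob\bigl(\bigcap_{i=1}^n\comp{A_i}\bigr) \ge \prod_{i=1}^n(1-x_i) > 0$.

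\emph{Step 2: specialisation.} Under the hypotheses of the lemma I would apply Step 1 with $x_i = 1/(d+1)$ for all $i$. Since each vertex of $D$ has at most $d$ neighbours and every $x_j > 0$, we get $x_i \prod_{\{i,j\}\in E}(1-x_j) \ge \frac{1}{d+1}\bigl(1-\frac{1}{d+1}\bigr)^d$. The elementary bound $(1+1/d)^d < \euler$ gives $\bigl(1-\frac{1}{d+1}\bigr)^d = (1+1/d)^{-d} > \euler^{-1}$, so $x_i\prod_{\{i,j\}\in E}(1-x_j) > \frac{1}{\euler(d+1)} \ge p \ge \prob(A_i)$ and the hypothesis of Step 1 holds. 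Hence $\prob\bigl(\bigcap_{i=1}^n\comp{A_i}\bigr) \ge (1-\tfrac{1}{d+1})^n > 0$, as required.

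The main obstacle is the careful bookkeeping in the inductive step of Step 1: one must ensure that the telescoping expansion of the denominator only ever conditions on index sets strictly smaller than $S$ (so that the induction hypothesis genuinely applies), handle the degenerate case $S_1 = \emptyset$ (where the denominator is $1$), and confirm inductively that all conditioning events have positive probability. Everything after that, including the whole of Step 2, is routine.
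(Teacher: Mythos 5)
Your proof is correct and is the standard argument — the paper does not prove \Cref{lemma:lll} itself but cites it from Alon and Spencer, and your two-step route (asymmetric form by induction on $|S|$, then specialisation with $x_i = 1/(d+1)$ and the bound $(1+1/d)^d < \euler$) is exactly that textbook proof. The only loose end is the degenerate case $d=0$, where $x_i = 1/(d+1) = 1$ falls outside the required range $[0,1)$; there the events are mutually independent and $\prob\bigl(\bigcap_{i=1}^n \comp{A_i}\bigr) = \prod_{i=1}^n (1-\prob(A_i)) \geq (1-p)^n > 0$ directly, since $\euler p \leq 1$ forces $p < 1$.
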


In the next proposition,
we apply the local lemma to provide a condition
that produces good lower bounds for blowup Ramsey numbers
in a very general setting.
Indeed, we will produce colourings that
avoid a monochromatic blowup of $H$
with vertex classes of distinct sizes.

Before we state the proposition,
we recall that a graph homomorphism $\phi \from H \to G$
is a map from the vertices of $H$ to the vertices of $G$
such that if $u \sim_H v$
then $\phi(u) \sim_G \phi(v)$.
We denote by $\inj(H,G)$ the number of injective homomorphism from $H$ to $G$.
An injective homomorphism from $H$ to $G$
can be thought as an copy of $H$ in $G$,
but you also keep track of which vertex of $H$
corresponds to which vertex of $G$.

\begin{proposition}
\label{prop:lower_bound_lll}
Let $G$ and $H$ be graphs
and $t_1, \dotsc, t_{v(H)}$ positive integers.
Set $\tilde H = H[t_1, \dotsc, t_{v(H)}]$
and $\Delta = \max\{ t_i t_j : i \sim_H j \}$.
Then {$G[n] \notbarrows{r} \tilde H$},
given that
\begin{equation}
\label{eq:lower_bound_lll_cond}
  \euler \inj(H,G) e(\tilde H)
  r^{1 - e(\tilde H)} \frac{\Delta}{n^2}
  \prod_{w \in [k]} \binom{n}{t_w}
    \leq 1.
\end{equation}
\end{proposition}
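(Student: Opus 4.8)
The plan is to colour the edges of $G[n]$ uniformly and independently at random with $r$ colours, and to apply the Lovász Local Lemma (\Cref{lemma:lll}) to show that with positive probability there is no canonical monochromatic copy of $\tilde H = H[t_1,\dotsc,t_{v(H)}]$. A canonical copy of $\tilde H$ in $G[n]$ is specified by two pieces of data: first, an injective homomorphism $\phi \from H \to G$ telling us which copy of $H$ in $G$ we are blowing up, and second, for each vertex $w$ of $H$, a choice of $t_w$ of the $n$ vertices lying in the blown-up class over $\phi(w)$. For each such choice, let $A$ be the event that the resulting copy of $\tilde H$ is monochromatic. The number of such events is $\inj(H,G) \prod_{w \in [k]} \binom{n}{t_w}$ (here $k = v(H)$), and since $\tilde H$ has $e(\tilde H)$ edges, each of which must receive one particular colour out of $r$, we have $\prob(A) = r \cdot (1/r)^{e(\tilde H)} = r^{1 - e(\tilde H)}$; this will be the parameter $p$ in the local lemma.

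The next step is to bound the dependency degree $d$. Two events $A$ and $A'$, corresponding to canonical copies $\tilde H_1$ and $\tilde H_2$, are independent unless these copies share an edge of $G[n]$. So I would fix one event $A$ with copy $\tilde H_1$ and count the events $A'$ whose copy shares at least one edge with $\tilde H_1$. Each such $A'$ uses some edge $xy$ of $G[n]$ that also lies in $\tilde H_1$; there are $e(\tilde H)$ choices for $xy$. Having fixed a shared edge $xy$, the edge $xy$ lies over some edge $v_i v_j$ of $G$, and in any copy $\tilde H_2$ through $xy$ the edge $xy$ must play the role of some edge $w w'$ of $H$ — but the key observation is that, rather than counting copies of $H$ through a fixed edge directly, it is cleaner to count as follows: the number of canonical copies of $\tilde H$ through a fixed edge $xy$ of $G[n]$ is at most the number of canonical copies of $\tilde H$ through the pair of classes containing $x$ and $y$, which is at most (number of injective homomorphisms $H \to G$) $\times$ (ways to pick the remaining blown-up vertices, with the classes of $x$ and $y$ forced to contain $x$ and $y$). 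This is why the bound in \eqref{eq:lower_bound_lll_cond} features the factor $\frac{\Delta}{n^2}\prod_w \binom{n}{t_w}$: the ratio $\binom{n-1}{t_w - 1}/\binom{n}{t_w} = t_w/n$ accounts for fixing one vertex in each of the two classes touched by $xy$, giving a factor $t_i t_j / n^2 \le \Delta/n^2$ once we also pay the $e(\tilde H)$ factor for choosing which edge of $\tilde H_1$ is shared. Collecting these, $d + 1 \le e(\tilde H) \cdot \frac{\Delta}{n^2} \cdot \inj(H,G) \prod_{w} \binom{n}{t_w}$, where the $+1$ is absorbed since every copy shares an edge with itself (or one simply notes $d+1 \le$ the same expression after a harmless adjustment).

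Finally I would plug $p = r^{1-e(\tilde H)}$ and this bound on $d+1$ into the condition $\euler\, p\, (d+1) \le 1$ of \Cref{lemma:lll}; this is exactly \eqref{eq:lower_bound_lll_cond}. The local lemma then gives that with positive probability none of the events $A$ occurs, i.e.\ the random colouring has no canonical monochromatic copy of $\tilde H$, so $G[n] \notbarrows{r} \tilde H$. The main obstacle — and the one point requiring care — is the dependency-degree count: one must argue cleanly that fixing a shared edge of $G[n]$ forces one vertex in each of two prescribed blown-up classes, so that the number of extensions to a full canonical copy of $\tilde H$ gains the factor $(t_i/n)(t_j/n) \le \Delta/n^2$ relative to the unconstrained count $\inj(H,G)\prod_w\binom{n}{t_w}$; everything else is bookkeeping. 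One should also double-check the edge case where $H$ (hence $\tilde H$) has no edges, but then the statement is vacuous or trivial.
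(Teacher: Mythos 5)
Your proposal is correct and follows essentially the same route as the paper: a uniform random $r$-colouring, the symmetric Lovász Local Lemma with $p = r^{1-e(\tilde H)}$, and a dependency-degree bound obtained by choosing the shared edge ($e(\tilde H)$ ways, i.e.\ $\sum_{i\sim j} t_i t_j$), which forces one vertex in each of two classes of the second copy and yields the factor $t_u t_v/n^2 \le \Delta/n^2$ against the unconstrained count $\inj(H,G)\prod_w \binom{n}{t_w}$. Your remark that the count of edge-sharing copies already includes the copy itself, absorbing the $+1$, is also the right way to reconcile the bound with the $\euler p(d+1)\le 1$ hypothesis.
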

\begin{proof}
Consider the blowup $G[n]$,
where for each vertex $j$,
we associate a vertex class $V_i$ of size $n$.
Consider a random uniform $r$-colouring of the edges of $G[n]$
and define,
for each canonical copy of $\tilde H$ in $G$,
the event that such copy is monochromatic.
Each one of these events have the same probability
$p = r^{1 - e(\tilde H)}$.
Furthermore,
each event is mutually independent from all other events
whose corresponding copy of $\tilde H$ is edge-disjoint
with its own copy of $\tilde H$.

Having fixed a copy of $\tilde H$ in $G[n]$
and an edge $i \sim j$ in $H$,
we bound the number of canonical copies of $\tilde H$
that has at least one edge in common between the vertex classes
$V_{\phi(i)}$ and $V_{\phi(j)}$ by
\begin{equation}
\label{eq:bound_choice}
  t_i t_j \inj(H,G) \binom{n-1}{t_u-1} \binom{n-1}{t_v - 1}
  \prod_{w \neq u,v} \binom{n}{t_w}
    = \frac{\inj(H,G) t_i t_j t_u t_v}{n^2} \prod_{w} \binom{n}{t_w}.
\end{equation}

 There are $t_i t_j$ choices for the intersecting edge.
 At most $\inj(H,G)$ choices for a homomorphism $\varphi$
 with $\varphi(u) = \phi(i)$ and $\varphi(v) = \phi(j)$
 and the remaining vertices of $\tilde H$ are chosen without restriction.
 Now we sum~(\ref{eq:bound_choice})
 over the possible choices for the edge $i \sim j$:
\begin{equation*}
d \leq \sum_{i \sim j} \frac{\inj(H,G) t_i t_j t_u t_v}{n^2}
    \prod_{w} \binom{n}{t_w}
  \leq \inj(H,G) e(\tilde H) \frac{\Delta}{n^2} \prod_{w} \binom{n}{t_w}.
\end{equation*}

Thus, the condition in~(\ref{eq:lower_bound_lll_cond})
implies that $\euler pd \leq 1$,
so we can apply \Cref{lemma:lll}
and conclude that with positive probability,
none of the events occur.
\end{proof}

Now, we get \Cref{thm:lower_bound_blow}
as a consequence of \Cref{prop:lower_bound_lll}.
We just have take all $t_i$'s equal
and work out asymptotically the best value of $n$ such that
condition~(\ref{eq:lower_bound_lll_cond}) holds.

\begin{proof}[Proof of \Cref{thm:lower_bound_blow}]
Set $t_i = t$ for all $i$,
$\Delta = t^2$
and apply \Cref{prop:lower_bound_lll}.
Condition~(\ref{eq:lower_bound_lll_cond}) translates to
\begin{equation}
\label{eq:lower_bound_sym_cond}
  \euler \inj(H,G)e(H) r^{1 - e(H)t^2} \frac{t^4}{n^2} \binom{n}{t}^{v(H)}
    \leq 1.
\end{equation}

We want to find the largest $n$, as a function of $t$,
such that condition~(\ref{eq:lower_bound_sym_cond}) holds,
since we then have {$\BlowRam{r}{G}{H}{t} > n$}.
We can take $n$ to be at least exponential in $t$,
so we can approximate the binomial coefficients
as $\binom{n}{t} \sim n^t/t!$.
Also recall Stirling's formula
$t! \sim \sqrt{2 \pi t} \; (t/ \euler)^t$.
Thus, it suffices to show,
as $t \to \infty$, that
\begin{equation*}
  \euler \inj(H,G) e(H) r \cdot \frac{t^4}{n^2}
  \left( \frac{\euler^t n^t}{\sqrt{2 \pi t} \; t^t} \right)^{v(H)}
  r^{-e(H)t^2}
  \ll 1.
\end{equation*}
Ignoring terms that are constant in $t$ and regrouping, we have
\begin{equation}
\label{eq:asymptotic}
  \biggl( \underbrace{\frac{t^{(8 - v(H))/2}}{n^2}}_{P_1} \biggr)
  \biggl( \underbrace{\frac{ \euler n}{t r^{d(H)t/2}} }_{P_2} \biggr)^{v(H)t}
  \ll 1.
\end{equation}

Let $P_1$ and $P_2$ be as identified above in~(\ref{eq:asymptotic}).
If for some $\beta > 0$,
we have that $P_1 \ll \beta^{-t}$
and $ P_2 \leq \beta^t$,
then the condition~(\ref{eq:lower_bound_sym_cond})
is satisfied for large enough $t$.
Note that $P_2 \leq \beta^t$ gives us
\begin{equation*}
  n \leq \bigl( \beta^{1/v(H)}\euler^{-1} \bigr) \, t \, r^{d(H) t/2}.
\end{equation*}

Therefore, if $\varepsilon > 0$,
the condition $P_1 \ll \beta^{-t}$ is satisfied
for $\beta = r^{d(H)} - \varepsilon$.
If $n_0$ is the largest $n$ that satisfies
condition~(\ref{eq:lower_bound_sym_cond}),
then we have
\begin{equation*}
  n_0
    \geq \bigl(1 + o(1)\bigr)
    \bigl( (r^{d(H)} - \varepsilon)^{1/v(H)} \euler^{-1} \bigr)
    \,t\, r^{d(H) t/2},
\end{equation*}
for all $\varepsilon > 0$.
Sending $\varepsilon$ to zero, we obtain
\begin{equation*}
  \BlowRam{r}{G}{H}{t}
    \geq \bigl(1 + o(1)\bigr)
    \bigl( r^{d(H)/v(H)} \euler^{-1} \bigr)
    \,t\, r^{d(H) t/2}. \qedhere
\end{equation*}
\end{proof}

As already discussed,
this lower bound does not depend on the graph $G$.
Note that the condition~(\ref{eq:lower_bound_sym_cond})
itself depends on $G$,
but this is lost in the asymptotic behaviour.
If $G$ had no copies of $H$ whatsoever,
then $\inj(H,G) = 0$
and condition~(\ref{eq:lower_bound_sym_cond}) is trivial.
Furthermore, recall that we do not assume that {$G \arrows{r} H$}.

It is important to notice that a similar lower bound
could also be obtained by an application of the first moment method.
In fact, we would obtain the weaker bound:
\begin{equation*}
  \BlowRam{r}{G}{H}{t}
    \geq \bigl(1+o(1)\bigr) \euler^{-1} \,t\, r^{d(H)t/2}.
\end{equation*}
Like in the bipartite case,
the application of the Lovász Local Lemma
provides only a constant improvement,
in this case,
an improvement of $r^{d(H)/v(H)}$.
This is a very minor enhancement over the first moment method,
but it recaptures the lower bound by
Hattingh and Henning~\cite{hattingh_henning1998bipartite_ramsey}.

We observe that \Cref{prop:lower_bound_lll}
can be used to provide a more direct counterpart to
\Cref{thm:upper_bound_blow_bigger} in the following sense.
We can show that for every constant $k > 1$, we have
\begin{equation}
\label{eq:asym_lower_bound}
  G[( t / \euler) (r^{d(v)}k)^t] \notbarrows{r} H[t, \dotsc, t, k^t].
\end{equation}

Indeed, this shows that any method capable of proving that
{$G[c^t] \barrows{r} H[t, \dotsc, t, k^t]$}
is limited to give a relatively weak upper bound on $c$.
Take the case {$K_2 \barrows{2} K_2$} for example.
We know from the bounds on the bipartite Ramsey numbers that
{$K_2[\log_2(t) 2^{t + 1}] \barrows{2} K_2[t]$},
but by applying \Cref{thm:upper_bound_blow_bigger}
we get that {$K_2[e^{64t}] \barrows{2} K_2[t]$}.
This bound is much weaker than the bound we already have,
but what we actually prove is that
{$K_2[e^{64t}] \barrows{2} K_2[t, e^{32t}]$}.
If our target graph is $K_2[t, e^{32t}]$,
then by~(\ref{eq:asym_lower_bound}),
we have {$K_2[e^{32.6 t}] \notbarrows{2} K_2[t, e^{32t}]$}.
This quantifies the limitations of our method,
considering the original aim of finding $n$
such that {$G[n] \barrows{r} H[t]$}.


\section{Random Graphs}
\label{sec:random_graphs}

Before we prove \Cref{thm:random_graph_blow},
let us recall some results concerning
Ramsey properties of random graphs.
Denote by $\Gnp{n}{p}$ the usual model
for a random graph of $n$ vertices
and where each edge is present with probability $p$,
independent from all other edges.
See Bollobás~\cite{bollobas2001book_randomgraph}
for background on random graphs.
In light of \Cref{thm:upper_bound_blow_bigger},
we are interested in finding $m$ such that
{$\Gnp{n}{p}[m] \barrows{r} H[t]$}.
For this to be possible,
it is necessary that {$\Gnp{n}{p} \arrows{r} H$}.
Thus, we first recall some of what is known
about the threshold for this property.

A graph parameter that is useful in the analysis
of Ramsey properties of random graphs is the $2$-density
$m_2(G)$ of a nonempty graph $G$.
First, define $d_2(K_2) = 1/2$
and $d_2(G) = \frac{e(G) - 1}{v(G) - 2}$ otherwise.
The $2$-density of $G$ is defined as
\begin{equation*}
  m_2(G) = \max \left\{ d_2(J) : J \subseteq G, e(J) \geq 1 \right\}.
\end{equation*}

The study of Ramsey properties of random graphs was initiated by Erd\H{o}s.
Answering his question,
Frankl and Rödl~\cite{frankl_rodl1986trianglefree}
showed that if $p \geq n^{-1/2 + \varepsilon}$
then {$\Gnp{n}{p} \arrows{2} K_3$} with high probability.
Later \L{}uczak, Ruciński,
and Voigt~\cite{luczak_rucinski_voigt1992ramsey_random}
(and independently Erd\H{o}s, Sós and Spencer)
proved that $p = n^{-1/2}$ is the threshold
for the event {$\Gnp{n}{p} \arrows{2} K_3$}.
In a series of papers,
Rödl and Ruciński~\cite{rodl_rucinski1993ramsey_lower,
rodl_rucinski1994ramsey_random, rodl_rucinski1995ramsey_threshold}
proved the following remarkable theorem.

\begin{theorem}
\label{thm:ramsey_threshold}
Let $r \geq 2$
and suppose that $H$ is a graph such that
at least one component of $H$ is not a star,
and in the case $r = 2$,
also not a path of length three.
Then, there exist positive constants $c$ and $C$ such that
\begin{equation*}
  \lim_{n \to \infty}
    \prob(\Gnp{n}{p} \arrows{r} H ) =
    \begin{cases}
      0 & \text{if } p \leq c n^{-1/m_2(H)} \\
      1 & \text{if } p \geq C n^{-1/m_2(H)}.
    \end{cases}
\end{equation*}
\end{theorem}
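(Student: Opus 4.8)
The plan is to prove the dichotomy in two halves: the \emph{$1$-statement}, that {$p\ge Cn^{-1/m_2(H)}$} forces {$\Gnp{n}{p}\arrows{r}H$} with high probability, and the \emph{$0$-statement}, that {$p\le cn^{-1/m_2(H)}$} gives {$\Gnp{n}{p}\notarrows{r}H$} with high probability. I expect the $0$-statement to be the genuine obstacle; the $1$-statement is, by now, accessible through the hypergraph container method.

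For the $1$-statement, consider the $e(H)$-uniform hypergraph $\mathcal{F}$ on vertex set $E(K_n)$ whose hyperedges are the edge sets of copies of $H$, so that the $H$-free graphs on $[n]$ are exactly the independent sets of $\mathcal{F}$. The parameter $m_2(H)$ is precisely what one needs to verify the co-degree hypothesis of the container theorem with $\tau\sim n^{-1/m_2(H)}$; this yields a family $\mathcal{C}$ of at most $\exp\!\bigl(O(n^{2-1/m_2(H)}\log n)\bigr)$ containers, each containing at most $\varepsilon n^{v(H)}$ copies of $H$, each equipped with a fingerprint $S(C)\subseteq C$ of size $O(n^{2-1/m_2(H)})$, such that every $H$-free $J$ satisfies $S(C)\subseteq J\subseteq C$ for some $C\in\mathcal{C}$. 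If {$\Gnp{n}{p}\notarrows{r}H$}, fix a colouring with $H$-free classes $E_1,\dots,E_r$ and containers $C_1,\dots,C_r$ with $S(C_i)\subseteq E_i\subseteq C_i$; then $S\defined\bigcup_iS(C_i)$ is a set of $O(n^{2-1/m_2(H)})$ edges of $\Gnp{n}{p}$ that determines the $C_i$, and $E(\Gnp{n}{p})\subseteq\bigcup_iC_i$. One then takes a union bound over the choices of $S$ --- there are $\exp\!\bigl(O(n^{2-1/m_2(H)}\log n)\bigr)$ of them --- against the event that $\Gnp{n}{p}$ has no edge outside $\bigcup_iC_i$. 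The catch is that this union bound does not close by counting alone, since $\bigcup_iC_i$ may be dense; one needs a sparse supersaturation (or removal) estimate guaranteeing that $r$ containers, each poor in copies of $H$, cannot jointly cover a typical $\Gnp{n}{p}$ whose edges also host the fingerprints, together with a two-round exposure so that $S$ is revealed first. This is the one delicate point of the direction, but it is well understood.

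For the $0$-statement I would produce the colouring by hand after a cleaning step. Below the threshold, the expected number of copies of $H$ through a fixed edge of $\Gnp{n}{p}$ is $O(1)$ --- this is exactly the content of $p=n^{-1/m_2(H)}$ --- and, once $c$ is small, the expected number of edges lying in many copies, and of pairs of copies of $H$ sharing at least two edges, is $o\bigl(e(\Gnp{n}{p})\bigr)$. So I would first show that with high probability there is a set $D$ of $o\bigl(e(\Gnp{n}{p})\bigr)$ edges whose removal leaves the ``copy hypergraph'' of $H$ in $\Gnp{n}{p}-D$ spread out enough --- bounded local overlap among copies --- that a uniformly random $r$-colouring of its edges has no monochromatic copy of $H$, by the Lovász Local Lemma (\Cref{lemma:lll}). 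Then colour $\Gnp{n}{p}-D$ this way and reinsert the edges of $D$ one at a time: each $e\in D$ lies in only boundedly many copies of $H$, so some colour for $e$ creates no monochromatic copy. Converting the sub-threshold sparsity into a workable deletion statement --- an analysis organised around the subgraphs $J\subseteq H$ with $d_2(J)=m_2(H)$ and the ways copies of $H$ overlap along them --- is the technical heart and the step I expect to be the main obstacle; the Local Lemma application and the greedy reinsertion of $D$ are then routine bookkeeping.
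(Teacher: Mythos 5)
This is \Cref{thm:ramsey_threshold}, the Rödl--Ruciński theorem; the paper quotes it from the literature and does not prove it, using only its 1-statement (which \Cref{thm:random_multiplicity} reproves in a robust form). Your sketch of the 1-statement is essentially that container argument of Nenadov and Steger, which the paper follows in \Cref{sec:random_graphs}. The point you leave vague --- ``sparse supersaturation together with a two-round exposure'' --- is resolved more simply than you suggest: one uses the deterministic statement \Cref{lem:folklore_ramsey} (if $G_1,\dotsc,G_r$ each contain at most $\varepsilon n^{v(H)}$ copies of $H$, then $K_n \setminus (G_1 \cup \dotsb \cup G_r)$ has at least $\delta n^2$ edges), and one closes the union bound not by counting fingerprints (the $\log n$ factor would kill that) but by weighting each fingerprint $S$ by $p^{|S|}$, which is legitimate since $S \subseteq \Gnp{n}{p}$ and the fingerprint edges are disjoint from the $\delta n^2$ missing edges, so no second exposure round is needed. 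With that replacement the 1-statement half is sound.

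The 0-statement half has a genuine gap, and it is not merely technical. First, the Local Lemma with a uniformly random $r$-colouring requires $\euler\, r^{1-e(H)} (d+1) \leq 1$, where $d$ counts copies of $H$ sharing an edge with a given copy; since $r^{1-e(H)}$ is a fixed constant, this forces $d$ to be essentially zero, i.e.\ the surviving copies must be pairwise edge-disjoint --- ``bounded local overlap'' is not enough. Second, and fatally, the greedy reinsertion of $D$ fails: an edge $e \in D$ lying in as few as $r$ copies of $H$, each of which is otherwise monochromatic in a distinct colour, admits no colour at all, so bounding the multiplicity of copies through $e$ does not save you. This is precisely why the 0-statement is the hard half of Rödl--Ruciński: the genuine proof classifies the connected components of the edge-intersection structure of copies of $H$ in $\Gnp{n}{p}$, shows that with high probability each component is either of bounded size and admits a proper colouring or would force a subgraph of density exceeding $m_2(H)$ (which does not appear when $p \leq c n^{-1/m_2(H)}$), and then colours component by component. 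A symptom of the gap is that your argument never uses the hypothesis that some component of $H$ is not a star, nor, for $r = 2$, that it is not a path of length three; the 0-statement is false without these hypotheses, so any correct proof must invoke them, and they enter exactly in the structural classification that your sketch replaces with the Local Lemma and greedy step.
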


Moreover, Rödl and Ruciński showed that
in the case where $H$ is a star forest,
the threshold for the property {$\Gnp{n}{p} \arrows{r} H$}
is actually $n^{-1 -1/((\Delta(H)-1)r + 1)}$,
where $\Delta(H)$ is the maximum degree of $H$.
This occurs before the $m_2$ threshold as above.
In the case $r = 2$
and $H$ being a forest whose components are stars and $P_3$'s,
with at least one $P_3$,
the 1-statement of \Cref{thm:ramsey_threshold} still holds,
but it is necessary to assume $p \ll n^{-1/m_2(H)} = 1/n$
for the 0-statement.

As we want to show that {$\Gnp{n}{p}[m] \barrows{r} H[t]$}
if $m$ is large enough,
we only need the 1-statement.
It is necessary to assume that $p \geq C n^{-1/m_2(H)}$
when $H$ is not a star forest,
but we assume this condition for all $H$ for simplicity.
In fact, we also assume that $\Delta(H) \geq 2$.
Under these conditions,
the subgraph count $X_H = \Mult{1}{H}{\Gnp{n}{p}}$
is concentrated around its mean,
see Janson, \L{}uczak and
Ruciński~\cite{janson_luczak_rucinski2011random_book}*{Section~3.1}.
This holds since $\Delta(H) \geq 2$ implies $m(H) < m_2(H)$,
where $m(H)$ is the maximum density
$d(J) \defined e(J)/v(J)$ of a subgraph $J \subseteq H$.
In particular,
$X_H \leq 2\expec(X_H)$ holds with high probability.

The best that we can hope
is that there is a canonical monochromatic copy of $H[t]$
in every $r$-colouring of an $m$-blowup of $\Gnp{n}{p}$,
where $m = m(H,r,t)$.
That is, $m$ does not depend on the ambient graph,
only on the graph that we want to find.

In view of \Cref{thm:upper_bound_blow_bigger},
it is sufficient to show that the multiplicity
$\RamRob{r}{H}{\Gnp{n}{p}}$ is bounded away from $0$
with high probability.
To prove this, we adapt the proof of
Nenadov and Steger~\cite{nenadov_steger2016ramsey}
of the 1-statement of \Cref{thm:ramsey_threshold}.
As in their proof,
we use a version of the hypergraph container theorem
of Balogh, Morris and Samotij~\cite{balogh_morris_samotij2015containers}
and Saxton and Thomason~\cite{saxton_thomason2015containers}.
We differ from Nenadov and Steger by using a container for sparse sets,
instead of independent sets.

We introduce some notation.
Let $\mathcal{G}(n)$ be the set of $2^{\binom{n}{2}}$
graphs with vertex set $[n]$.
Let $\mathcal{F}_r(H,n,b)$ be the family defined as
\begin{equation*}
  \mathcal{F}_r(H,n,b)
    = \left\{ G \in \mathcal{G}(n) : \Mult{r}{H}{G} \leq b  \right\}.
\end{equation*}
Note that $\mathcal{F}_1(H,n,0)$ is
the family of $H$-free graphs on $n$ vertices
and $\mathcal{F}_r(H,n,0)$ is
the family of graphs $G$ on $n$ vertices
for which {$G \notarrows{r} H$}.

The specific container theorem we use,
concerning $H$-sparse graphs,
was stated explicitly by Saxton and Thomason.

\begin{theorem}
\label{thm:containers}
Let $H$ be a graph with $e(H) \geq 2$.
For any $\varepsilon > 0$
there exists $n_0$
and $k = k(H,\varepsilon) > 0$
such that the following is true
for all $n \geq n_0$.
For $n^{-1/m_2(H)} \leq q \leq 1/n_0$,
there exist functions
\begin{equation*}
  f \from \mathcal{G}(n) \to \mathcal{G}(n)
  \quad \text{ and } \quad
  g \from \mathcal{F}_1(H,n,q^{e(H)}n^{v(H)}) \to \mathcal{G}(n),
\end{equation*}
such that for all $G \in \mathcal{F}_1(H,n,q^{e(H)}n^{v(H)})$,
\begin{enumerate}[label=(\roman*)]
  \item $e(g(G)) \leq kqn^2$,
  \item $\Mult{1}{H}{f(g(G))} \leq \varepsilon n^{v(H)}$,
  \item $g(G) \subseteq G \subseteq f(g(G))$.
\end{enumerate}
\end{theorem}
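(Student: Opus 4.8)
The plan is to obtain this as (essentially) a restatement of a result of Saxton and Thomason, derived from the general hypergraph container theorem of Balogh, Morris and Samotij and of Saxton and Thomason; I only indicate how the hypotheses are checked. I would build the $\ell$-uniform hypergraph $\mathcal H = \mathcal H(n)$ with vertex set $E(K_n)$, where $\ell = e(H) \geq 2$, whose hyperedges are the edge sets of copies of $H$ in $K_n$. Then $e(\mathcal H) = \Theta(n^{v(H)})$, the independent sets of $\mathcal H$ are exactly the $H$-free graphs on $[n]$, and, up to a bounded factor counting copies of $H$ through a fixed edge set, a graph $G$ spans at most $b$ hyperedges of $\mathcal H$ precisely when $G \in \mathcal F_1(H,n,b)$ (so $\Mult{1}{H}{G} \le b$). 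Under this dictionary the sought functions $f$ and $g$ will be, respectively, the map reconstructing a container from a fingerprint and the fingerprint map itself, and the three conclusions are exactly what a container theorem delivers.

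The substantive step is to verify the degree conditions required by the container lemma with $\tau = \Theta(q)$ and $q \geq n^{-1/m_2(H)}$. For $2 \leq j \leq \ell$ one must bound the maximal $j$-degree $\Delta_j(\mathcal H)$, i.e. the largest number of copies of $H$ in $K_n$ whose edge set contains a prescribed set of $j$ edges; maximising over which subgraph $J \subseteq H$ with $e(J) = j$ is prescribed, this count is $\Theta\!\big(\max_J n^{v(H)-v(J)}\big)$. The required inequality $\Delta_j(\mathcal H) \leq C \tau^{j-1} e(\mathcal H)/v(\mathcal H)$ then reduces, using $e(\mathcal H)/v(\mathcal H) = \Theta(n^{v(H)-2})$, to $n^{2-v(J)} \leq C\tau^{e(J)-1}$ for every relevant $J$, i.e. to $\tau \geq c\, n^{-(v(J)-2)/(e(J)-1)} = c\, n^{-1/d_2(J)}$; since $\max_J d_2(J) = m_2(H)$ by definition, this is exactly the hypothesis $q \geq n^{-1/m_2(H)}$ with room to spare. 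Feeding this into the container theorem yields a family of containers together with a fingerprint map of size $O(\tau n^2) = O(qn^2)$ edges, giving (i) and the sandwiching (iii); a routine supersaturation step converts ``$f(g(G))$ spans at most $\varepsilon' e(\mathcal H)$ hyperedges'' into ``$\Mult{1}{H}{f(g(G))} \le \varepsilon n^{v(H)}$'', which is (ii), after choosing $\varepsilon'$ small in terms of $\varepsilon$ and $H$.

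The one point where the bare independent-set version of the container theorem is not quite enough is that $f,g$ must be defined on all of $\mathcal F_1(H,n,q^{e(H)}n^{v(H)})$, not merely on $H$-free graphs. This is harmless: a graph in this family spans only $O(q^{e(H)} n^{v(H)}) = o(e(\mathcal H))$ hyperedges of $\mathcal H$ — here the exponent $e(H) \ge 2$ together with $q \le 1/n_0 \to 0$ forces this fraction to $0$ — so the form of the container lemma covering every vertex set that spans at most a small constant fraction of the hyperedges applies verbatim, and this is the version Saxton and Thomason state explicitly. I expect the only genuine obstacle to be the bookkeeping of the previous paragraph, namely checking $\Delta_j(\mathcal H) \leq C\tau^{j-1} e(\mathcal H)/v(\mathcal H)$ uniformly over $2 \le j \le e(H)$; this is precisely where $m_2(H)$ and the choice $q \geq n^{-1/m_2(H)}$ are used, and it is entirely standard but must be carried out with care over all subgraphs $J \subseteq H$.
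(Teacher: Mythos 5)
Your proposal is correct and follows essentially the same route as the paper, which simply quotes this statement as Theorem~9.2 of Saxton and Thomason (their container theorem for $H$-sparse, not merely $H$-free, graphs) together with a remark letting one replace the tuple of fingerprints by a single signature set. Your verification of the degree conditions $\Delta_j(\mathcal{H}) \leq C\tau^{j-1}e(\mathcal{H})/v(\mathcal{H})$ via the subgraphs $J \subseteq H$ and the identity $\max_J d_2(J) = m_2(H)$ is exactly the standard computation underlying that citation (the only quibble being that $q \leq 1/n_0$ makes the spanned fraction of hyperedges a small constant rather than $o(1)$, which is all the sparse-set container theorem needs).
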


This theorem is a consequence
of Theorem~9.2 in~\cite{saxton_thomason2015containers}
(take $\ell = 2$, $\tilde G = G(n,H)$,
and note that (b) implies (ii).
To obtain (i) and (iii), use (a) and (d)
together with Remark~2.2 in~\cite{saxton_thomason2016online_containers},
which allows us to consider a single set as a `signature',
instead of a tuple $T = (T_1, \dotsc, T_s)$).
The following standard lemma is also useful for us.
For a proof,
see Corollary 8.3 in~\cite{frieze_karonski2016random_book},
for instance.

\begin{lemma}
\label{lem:folklore_ramsey}
Let $r \geq 1$ and $H$ be a graph.
Then there are constants
$\delta, \varepsilon > 0$ and $n_0$
such that the following is true
for all $n \geq n_0$.
For any graphs
$G_1, \dotsc, G_r
  \in \mathcal{F}_1(H,n,\varepsilon n^{v(H)})$,
we have
$e(K_n \setminus (G_1 \cup \dotsb \cup G_r))
  \geq \delta n^2$.
\end{lemma}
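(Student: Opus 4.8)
The plan is to prove \Cref{lem:folklore_ramsey} by a straightforward counting argument, using the fact that a graph with few copies of $H$ must be sparse, together with Ramsey's theorem applied to a monochromatic-everywhere colouring. First I would observe that it suffices to handle the case where $H$ is a single edge's worth of structure — more precisely, by passing to a subgraph, I may assume $H$ is $2$-connected or even a clique, since if $H$ has an edge then $K_{v(H)} \supseteq$ a copy of... actually the cleanest route is: let $s = v(H)$ and note that any graph on $n$ vertices containing no $K_s$ has at most $(1 - \tfrac{1}{s-1} + o(1))\binom n2$ edges by Turán's theorem, but more relevantly, I want a statement about graphs with few copies of $H$, not $H$-free graphs.

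Here is the approach I would actually take. Fix $H$ with $v(H) = s$. The key claim is that if $G$ has at most $\varepsilon n^s$ copies of $H$, then $G$ has at most $\gamma(\varepsilon) n^2$ edges where $\gamma(\varepsilon) \to 0$ as $\varepsilon \to 0$; equivalently, a graph with $c n^2$ edges (for $c$ a small constant) contains $\Omega_c(n^s)$ copies of $H$. This is a standard supersaturation consequence of the Kővári–Sós–Turán / Erdős–Stone machinery: by Erdős–Stone, a graph with edge density bounded away from $1 - \tfrac1{s-1}$ from above... no — I want the simplest correct version. Since $H \subseteq K_s$, it suffices to find many copies of $K_s$. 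But a graph of density $2c$ need not contain $K_s$ at all (e.g. complete bipartite). So instead I use that $H$ is a fixed graph: by the Kővári–Sós–Turán theorem, every graph on $n$ vertices with at least $\frac12 n^{2 - 1/s}$ edges (say) contains $K_{s,s}$, hence contains $H$ as a subgraph, hence (by supersaturation, which follows by averaging over all $n^s$ ordered $s$-tuples and a standard deletion argument) contains at least $\kappa n^s$ labelled copies of $H$ for some $\kappa = \kappa(H) > 0$ once $G$ has at least $c n^2$ edges with $c$ fixed. Running this contrapositively gives the desired $\delta, \varepsilon$: choose $\varepsilon$ so small that $\varepsilon n^{v(H)} < \kappa n^{v(H)}$, so that each $G_i \in \mathcal F_1(H, n, \varepsilon n^{v(H)})$ has fewer than $\kappa n^{v(H)}$ copies of $H$, whence $e(G_i) \leq c n^2$ for a suitable constant $c$.

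With this in hand the lemma is immediate: given $G_1, \dotsc, G_r \in \mathcal{F}_1(H, n, \varepsilon n^{v(H)})$ with $\varepsilon$ chosen as above, each has $e(G_i) \le c n^2$, so
\begin{equation*}
  e\bigl(K_n \setminus (G_1 \cup \dotsb \cup G_r)\bigr)
    \geq \binom n2 - r c n^2.
\end{equation*}
Choosing $c$ (hence $\varepsilon$) small enough that $r c < \tfrac14$, say, the right-hand side is at least $\tfrac14 n^2 - \tfrac12 n \geq \tfrac18 n^2$ for $n$ large, so $\delta = \tfrac18$ works. Here I should be slightly careful that "$G_i \in \mathcal{F}_1$ has few copies $\Rightarrow$ sparse" uses the constant $\kappa$ coming from supersaturation of $H$ inside dense graphs, which is where the dependence $\varepsilon = \varepsilon(H)$ and $\delta = \delta(H)$ enters; note $r$ only affects the final choice of how small $c$ must be.

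The main obstacle — really the only non-bookkeeping point — is establishing the supersaturation statement "density $\geq c$ forces $\geq \kappa n^{v(H)}$ copies of $H$" with the right quantifier order. I would get it from Kővári–Sós–Turán to first guarantee a single copy of $H$ in any graph with $\geq \tfrac12 n^{2-1/v(H)}$ edges (which is $\le c n^2$ for large $n$), and then the standard averaging/deletion argument (delete all but a $t$-fraction of vertices for suitable small $t$, count copies in the random induced subgraph, iterate) to boost one copy to a positive density of copies; alternatively one can cite the Erdős–Simonovits supersaturation theorem directly. Since the paper already freely invokes container theory and Rödl–Ruciński, citing supersaturation (or K\H{o}v\'ari--S\'os--Tur\'an plus a one-line averaging argument) is entirely in keeping, and indeed the lemma itself is flagged as standard with a reference to \cite{frieze_karonski2016random_book}, so the proof can be kept to a few lines.
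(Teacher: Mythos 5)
Your argument has a genuine gap at its central claim, namely that a graph with at most $\varepsilon n^{v(H)}$ copies of $H$ must have at most $\gamma(\varepsilon)n^2$ edges with $\gamma(\varepsilon)\to 0$. This is false for non-bipartite $H$: the complete bipartite graph $K_{\lceil n/2\rceil,\lfloor n/2\rfloor}$ has about $n^2/4$ edges and contains no triangle at all, so for $H=K_3$ it lies in $\mathcal{F}_1(K_3,n,\varepsilon n^{3})$ for every $\varepsilon>0$. The specific step that fails is ``$K_{s,s}$ contains $H$ as a subgraph'': $K_{s,s}$ is bipartite, so it contains $H$ only when $H$ is bipartite. (You flagged exactly this obstruction yourself --- ``a graph of density $2c$ need not contain $K_s$ at all'' --- but then reintroduced it via K\H{o}v\'ari--S\'os--Tur\'an.) Supersaturation for non-bipartite $H$ kicks in only above the Tur\'an density $1-1/(\chi(H)-1)$, and with that weaker bound on each $e(G_i)$ the concluding union bound gives nothing, since $r\bigl(1-1/(\chi(H)-1)\bigr)\binom{n}{2}\geq\binom{n}{2}$ already for $r=2$ and $\chi(H)=3$. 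So the whole strategy of bounding each $G_i$ separately and summing cannot be repaired.

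The lemma is genuinely a Ramsey supersaturation statement about the union, and the standard proof (the one behind the cited Corollary 8.3 of Frieze--Karo\'nski) runs as follows. Suppose $e(K_n\setminus(G_1\cup\dotsb\cup G_r))<\delta n^2$ and let $N=\Ram{r}{H}$ be the $r$-colour Ramsey number of $H$. Each missing edge lies in at most $\binom{n-2}{N-2}$ copies of $K_N$, so at least $\binom{n}{N}-\delta n^2\binom{n-2}{N-2}\geq\tfrac12\binom{n}{N}$ copies of $K_N$ (for $\delta$ small in terms of $N$) have all their edges in $G_1\cup\dotsb\cup G_r$. Colour each such edge with some index $i$ for which it belongs to $G_i$; by Ramsey's theorem each of these copies of $K_N$ contains a monochromatic copy of $H$, which is therefore a copy of $H$ lying entirely inside a single $G_i$. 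Since each copy of $H$ lies in at most $\binom{n-v(H)}{N-v(H)}$ copies of $K_N$, some $G_i$ contains at least $\tfrac{1}{2r}\binom{n}{N}/\binom{n-v(H)}{N-v(H)}=\Omega(n^{v(H)})$ copies of $H$, contradicting $\Mult{1}{H}{G_i}\leq\varepsilon n^{v(H)}$ once $\varepsilon$ is chosen small enough. Note that $r$ enters essentially, through $N=\Ram{r}{H}$, not merely through a final numerical adjustment as in your write-up.
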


Now, we can precisely state
and prove the robustness result for $\Gnp{n}{p}$.

\begin{theorem}
\label{thm:random_multiplicity}
Let $r \geq 2$ and let $H$ be a graph
with $\Delta(H) \geq 2$.
There are constants $\eta = \eta(H,r) > 0$,
and $C = C(H,r)$ such that,
if $p \geq Cn^{-1/m_2(H)}$ then
\begin{equation*}
  \lim_{n \to \infty}
    \prob \bigl( \RamRob{r}{H}{\Gnp{n}{p}} \geq \eta \bigr)
    = 1.
\end{equation*}
\end{theorem}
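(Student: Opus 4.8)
The plan is to use the container theorem (\Cref{thm:containers}) to reduce the statement ``almost every $\Gnp{n}{p}$ has $\RamRob{r}{H}{\Gnp{n}{p}} \geq \eta$'' to a union bound over a small family of container graphs, following Nenadov and Steger, but keeping track of multiplicities rather than only the arrowing property. Fix a small $\varepsilon>0$ (to be chosen via \Cref{lem:folklore_ramsey}) and let $q = q(n)$ be comparable to $n^{-1/m_2(H)}$; apply \Cref{thm:containers} with this $\varepsilon$ and $q$ to get the functions $f,g$. The key structural input is: if $G$ is any graph with $\RamRob{r}{H}{G}$ small, then in an optimal $r$-colouring each colour class $G_i$ has few copies of $H$, hence $G_i \in \mathcal{F}_1(H,n,\varepsilon n^{v(H)})$ provided $\RamRob{r}{H}{G} < \varepsilon / (\text{something})$ and $\Mult{1}{H}{G}$ is not too large — this is exactly where $X_H \leq 2\expec(X_H)$ with high probability is used to control the denominator. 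But by \Cref{lem:folklore_ramsey}, the union of $r$ such sparse graphs misses $\delta n^2$ edges of $K_n$, so no edge-partition of $K_n$ into $r$ parts can have all parts $H$-sparse; the same is then true for any dense enough graph. Thus having $\RamRob{r}{H}{G}$ small forces $G$ to be ``locally sparse'' in a way that a typical $\Gnp{n}{p}$ is not.

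Concretely, I would argue as follows. Suppose $\RamRob{r}{H}{\Gnp{n}{p}} < \eta$ for some small $\eta$; condition on the high-probability event that $X_H \leq 2\expec(X_H) \leq 2 n^{v(H)} p^{e(H)}$ (valid since $\Delta(H)\geq 2$ gives $m(H) < m_2(H)$, so the count concentrates) and that $e(\Gnp{n}{p}) \geq \tfrac12 \binom{n}{2} p$. Then the witnessing colouring has each $G_i$ with $\Mult{1}{H}{G_i} \leq \Mult{r}{H}{\Gnp{n}{p}} \leq \eta X_H \leq 2\eta n^{v(H)} p^{e(H)}$. Now for each $i$, $G_i \subseteq \Gnp{n}{p}$ is $H$-sparse: taking $q$ so that $q^{e(H)}n^{v(H)} \geq 2\eta n^{v(H)} p^{e(H)}$, i.e. $q \geq (2\eta)^{1/e(H)} p$ (so $q$ and $p$ are of the same order, and $q \leq 1/n_0$ holds for $p$ below the relevant range — one handles large $p$ separately by monotonicity), we get $G_i \in \mathcal{F}_1(H,n,q^{e(H)}n^{v(H)})$. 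Apply the container functions to obtain $S_i := g(G_i)$ with $e(S_i) \leq kqn^2$ and $f(S_i) \supseteq G_i$ with $\Mult{1}{H}{f(S_i)} \leq \varepsilon n^{v(H)}$. So $\Gnp{n}{p} = G_1 \cup \dots \cup G_r \subseteq f(S_1) \cup \dots \cup f(S_r)$, and every edge of $\Gnp{n}{p}$ not in $\bigcup_i S_i$ lies in some $f(S_i)$. By \Cref{lem:folklore_ramsey} applied to the graphs $f(S_i)$ (each in $\mathcal{F}_1(H,n,\varepsilon n^{v(H)})$), the union $\bigcup_i f(S_i)$ misses at least $\delta n^2$ edges; hence all of those $\geq \delta n^2$ non-edges of $\bigcup_i f(S_i)$ are also non-edges of $\Gnp{n}{p}$, except for the at most $rkqn^2$ edges sitting in some $S_i$. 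The key point is that the pair $(S_1,\dots,S_r)$ is determined by $\Gnp{n}{p}$ and ranges over a family of size at most $\binom{\binom{n}{2}}{\leq kqn^2}^r \leq \exp(O(qn^2 \log(1/q)))$.

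Finish with a union bound over this family of $r$-tuples of sparse ``fingerprints''. Fix $(S_1,\dots,S_r)$; let $F = \bigcup_i f(S_i)$, which is a deterministic graph (a function of the $S_i$ alone) with $e(\overline F) \geq \delta n^2$. The event that $\Gnp{n}{p}$ has this fingerprint forces: every edge of $\Gnp{n}{p}$ inside $\overline{F} \setminus \bigcup_i S_i$ is absent — but that set has $\geq \delta n^2 - rkqn^2 \geq (\delta/2) n^2$ edges once $k q$ is small, i.e. once the constant $C$ in $p \geq C n^{-1/m_2(H)}$ (hence in $q$) is chosen so that $rk(2\eta)^{1/e(H)}$-type quantities are $\leq \delta/2$. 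So this event has probability at most $(1-p)^{(\delta/2)n^2} \leq \exp(-(\delta/2) p n^2)$. Multiplying by the number of fingerprints:
\[
  \prob\bigl(\RamRob{r}{H}{\Gnp{n}{p}} < \eta\bigr)
    \leq o(1) + \exp\bigl(O(qn^2\log(1/q))\bigr)\exp\bigl(-\tfrac{\delta}{2}pn^2\bigr).
\]
Since $q \asymp p \geq Cn^{-1/m_2(H)}$, we have $\log(1/q) = O(\log n)$ and $p n^2 \geq C n^{2-1/m_2(H)} \gg q n^2 \log(1/q)$ as soon as $C$ is large enough (the $\log n$ factor is absorbed because $pn^2$ and $qn^2$ differ by the constant factor $C/(2\eta)^{1/e(H)}$, which we take $\gg \log n$... more precisely, one picks $\eta$ first, then $C$ large relative to $k,\delta,\eta$, and the $\log(1/q)=\Theta(\log n)$ loss is beaten by the linear-in-$C$ gain in the exponent), so the right-hand side tends to $0$.

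The main obstacle I expect is the bookkeeping in choosing the constants in the correct order — $\varepsilon$ and $\delta$ come from \Cref{lem:folklore_ramsey}, then $k$ from \Cref{thm:containers} (which depends on $\varepsilon$), then $\eta$ must be small enough that $2\eta n^{v(H)}p^{e(H)} \leq q^{e(H)}n^{v(H)}$ with $q$ still in the allowed window $[n^{-1/m_2(H)}, 1/n_0]$, and finally $C$ large enough that $rkq \leq \delta/2$ and that the fingerprint-counting exponent $O(qn^2\log n)$ is dominated by $\tfrac{\delta}{2}pn^2$. A secondary subtlety is the range of $p$: \Cref{thm:containers} requires $q \leq 1/n_0$, so for $p$ close to constant one cannot apply it directly; but $\RamRob{r}{H}{\cdot}$ is monotone under adding edges (more edges only helps, since any colouring of the bigger graph restricts to one of the smaller — one must check this monotonicity, which follows from the definition of $M_r$ via an averaging/deletion argument), so it suffices to prove the statement at $p = Cn^{-1/m_2(H)}$ and couple. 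Combining with \Cref{thm:upper_bound_blow_bigger}, which turns $\RamRob{r}{H}{\Gnp{n}{p}} \geq \eta$ into $\Gnp{n}{p}[c^t] \barrows{r} H[t]$ with $c = c(\eta, H, r)$, then yields \Cref{thm:random_graph_blow}.
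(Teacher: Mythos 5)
Your overall strategy is the same as the paper's: reduce low robustness to low $\Mult{r}{H}{\Gnp{n}{p}}$ via concentration of $X_H$, apply the container theorem to the colour classes of a witnessing colouring to extract signatures $S_i$ with $S_i \subseteq G_i \subseteq f(S_i)$, invoke \Cref{lem:folklore_ramsey} to find $\delta n^2$ edges of $K_n$ missed by $f(S_1)\cup\dotsb\cup f(S_r)$, and finish with a union bound over signature tuples. However, your union bound has a genuine gap. You bound the number of fingerprints by $\exp(O(qn^2\log(1/q)))$ and the probability of each fingerprint event by $\exp(-(\delta/2)pn^2)$, and claim the product tends to $0$. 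But your own reduction forces $q \geq (2\eta)^{1/e(H)}p$ with $\eta$ a constant, so $q = \Theta(p)$ and the entropy term is $\exp(\Theta(pn^2\log n))$, which dominates $\exp(-(\delta/2)pn^2)$ no matter how the constants $\eta, C, k, \delta$ are chosen: increasing $C$ only raises the floor on $p$, it does not change the ratio of the two exponents, which is $\Theta(\log n)$. Your parenthetical claim that the $\log n$ loss is beaten by a ``linear-in-$C$ gain'' is therefore not correct as stated.

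The missing idea is to use the fact that the signature edges must themselves be present in $\Gnp{n}{p}$ (property (iii) of \Cref{thm:containers} gives $S_i \subseteq G_i \subseteq \Gnp{n}{p}$): since $E(S_1)\cup\dotsb\cup E(S_r)$ and $E(K(S_1,\dotsc,S_r))$ are disjoint, the fingerprint event has probability at most $p^{|S^+|}(1-p)^{\delta n^2}$, the two factors being independent. Summing over fingerprints with $|S^+| = s \leq rkqn^2$ gives $\binom{\binom{n}{2}}{s}r^sp^s \leq (erpn^2/2s)^s$; the factor $p^s$ cancels the $n^2$ inside the entropy, leaving roughly $(e/(2k\gamma))^{rk\gamma pn^2}$ where $\gamma = q/p$, which is at most $\exp(\delta pn^2/2)$ once $\gamma$ is chosen small relative to $k$ and $\delta$. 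This is exactly how the paper closes the argument. The rest of your proposal (the passage between robustness and multiplicity, the role of $\Delta(H)\geq 2$ in concentrating $X_H$, and the order in which the constants are fixed) matches the paper. Your extra remark about handling $p=\Theta(1)$ by coupling is a reasonable supplement to the paper, which simply assumes $p=o(1)$, but note that monotonicity of $\RamRob{r}{H}{\cdot}$ under adding edges is not obvious (both $\Mult{r}{H}{\cdot}$ and $\Mult{1}{H}{\cdot}$ increase), so that step would need an actual argument rather than the sketch you give.
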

\begin{proof}
Let $\varepsilon = \varepsilon(H,r)$
and $\delta = \delta(H,r)$ be as in \Cref{lem:folklore_ramsey},
and let $n_0 = n_0(H,r)$
and $k = k(H,r)$ be as in \Cref{thm:containers}
for $H$, $\varepsilon$ and $q = \gamma p$,
where $\gamma \geq 0$ is to be determined.
Choose $C = 1/\gamma$ and let $p \geq C n^{-1/m_2(H)}$,
with $p = o(1)$,
and assume that $n \geq n_0$.
We have $p \gg 1/n^2$,
since $m_2(H) \geq 1$ whenever $\Delta(H) \geq 2$.

Consider the event
$\mathcal{E} =
  \bigl\{ \Mult{r}{H}{\Gnp{n}{p}} \leq q^{e(H)}n^{v(H)} \bigr\}$.
If $\mathcal{E}$ holds,
then there exists a colouring $c \from E(\Gnp{n}{p}) \to [r]$
such that for all $i \in [r]$,
the subgraph of edges of colour $i$,
$G_i \defined (\Gnp{n}{p})_{(i)}$,
have few copies of $H$,
namely, $\Mult{1}{H}{G_i} \leq q^{e(H)}n^{v(H)}$.
By \Cref{thm:containers}, for all $i \in [r]$
there is a `signature' graph $S_i \defined g(G_i)$,
such that $S_i \subseteq G_i \subseteq f(S_i)$.
Define the graph
\begin{equation*}
  K(S_1, \dotsc, S_r)
    \defined K_n \setminus (f(S_1) \cup \dotsb \cup f(S_r) ),
\end{equation*}
and note that $\Gnp{n}{p}$ avoids all the edges of $K(S_1, \dotsc, S_r)$.
Hence, by the union bound, we have
\begin{equation*}
  \prob( \mathcal{E} )
  \leq \sum_{(S_1, \dotsc, S_r)} \prob
    \left(
      \begin{array}{@{}c@{}}
        S_1, \dotsc, S_r \subseteq \Gnp{n}{p} \text{ and} \\
        K(S_1, \dotsc, S_r) \subseteq \comp{\Gnp{n}{p}}
      \end{array}
    \right),
\end{equation*}
where $(S_1, \dotsc, S_r)$ runs over all the possible
sequences of signatures given by \Cref{thm:containers}.
Note that $E(S_1) \cup \dotsb \cup E(S_r)$
and $E(K(S_1, \dotsc ,S_r))$ are disjoint sets of edges,
and hence the events $S_1, \dotsb,  S_r \subseteq \Gnp{n}{p}$
and $K(S_1, \dotsc, S_r) \subseteq \Gnp{n}{p}^c$ are independent.

Since $\Mult{1}{H}{f(S_i)} \leq \varepsilon n^{v(H)}$,
\Cref{lem:folklore_ramsey} implies that
$e(K(S_1, \dotsc, S_r)) \geq  \delta n^2$.
Defining $S^+ \defined \bigcup_{i \in [r]}E(S_i)$,
we can bound
\begin{align*}
  \prob( S_1, \dotsc, S_R \subseteq \Gnp{n}{p} )
    &\leq p^{ |S^+| }, \text{ and} \\
  \prob( K(S_1, \dotsc, S_r) \subseteq \comp{\Gnp{n}{p}} )
    &\leq (1-p)^{\delta n^2} \leq \exp(-\delta p n^2).
\end{align*}

Now, the whole sum can be bounded via the following strategy.
We sum over the possible values
of $s \defined |S^+| \leq r k q n^2$.
First, we choose $s$ edges in $K_n$
to correspond to the union $S^+$,
so we have $\binom{\binom{n}{2}}{s}$ choices.
Next, for each edge,
we choose on which signatures they will appear.
Since the signatures are disjoint,
each edge has $r$ choices,
so we have at most $r^s$ possibilities in total.
Thus, we have
\begin{align*}
  \prob( \mathcal{E} )
    &\leq \exp(-\delta p n^2)
      \sum_{s \leq rkqn^2} \binom{\binom{n}{2}}{s} r^s p^s \\
    &\leq \exp(-\delta p n^2)
      \sum_{s \leq rkqn^2} \left( \frac{ \euler r p n^2 }{2s}\right)^s.
\end{align*}
Observing that $x \mapsto (A/x)^x$
is increasing on the interval $(0,A/\euler)$,
as long as $M < A/ \euler$,
we can bound the sum $\sum_{x \leq M} (A/x)^x$
by $M (A/M)^M$.
Recall that $q = \gamma p$,
and by choosing $\gamma = \gamma(H,r)$ sufficiently small
with respect to $k = k(H,r)$,
and consequently,
choosing $C = 1/\gamma$ large enough,
we have
\begin{equation*}
  \sum_{s \leq rkqn^2} \left( \frac{\euler r p n^2 }{2 s} \right)^s
    \leq rk \gamma p n^2
      \left( \frac{\euler}{2 k \gamma} \right)^{rk\gamma  pn^2}
    \leq \exp(\delta p n^2 /2).
\end{equation*}

In the end, we obtain
$\prob( \mathcal{E} )
  \leq \exp(-\delta n^2p /2) = o(1)$,
since $p \gg 1/n^2$.
Now, consider the random variable
$X_H = \Mult{1}{H}{\Gnp{n}{p}}$.
We have seen above that $X_H \leq 2 \expec(X_H)$
holds with high probability in the range we consider.
Since $\expec(X_H) \leq n^{v(H)}p^{e(H)}$,
we have that
\begin{equation*}
  \RamRob{r}{H}{\Gnp{n}{p}}
    = \frac{ \Mult{r}{H}{\Gnp{n}{p}} }{ \Mult{1}{H}{\Gnp{n}{p}} }
    \geq \frac{q^{e(H)}n^{v(H)}}{2 \expec[X_H]}
    \geq \frac{\gamma^{e(H)}}{2}
\end{equation*}
holds with high probability.
Thus, choosing $\eta = \gamma^{e(H)}/2$,
we are done.
\end{proof}

If $\Delta(H) \leq 1$,
then $m_2(H) = m(H) = 1/2$,
thus the threshold function for {$\Gnp{n}{p} \arrows{r} H$} is $p = 1/n^2$.
In this range, the number of edges in $\Gnp{n}{p}$
converges in distribution to a Poisson random variable,
so in particular,
there is a positive probability that $\Gnp{n}{p}$ is empty.
Therefore, \Cref{thm:random_multiplicity} cannot hold
for all $p \geq C/n^2$ in such cases.
If we instead assume $p \gg 1/n^2$,
it is easy to see that {$\Gnp{n}{p} \arrows{r} H$} with high probability.
For completeness,
we observe that \Cref{thm:random_multiplicity}
implies \Cref{thm:random_graph_blow}.

\begin{proof}[Proof of \Cref{thm:random_graph_blow}]
By \Cref{thm:random_multiplicity},
$\RamRob{r}{H}{\Gnp{n}{p}} \geq \eta$ with high probability
for some $\eta = \eta(H,r) > 0$.
In particular, {$\Gnp{n}{p} \arrows{r} H$},
so by \Cref{thm:upper_bound_blow_bigger},
{$\Gnp{n}{p}[c^t] \barrows{r} H[t]$} for
\begin{equation*}
  c = \exp \left(
      \frac{ r^{v(H)} 4^{v(H)^2 - v(H)} }
        { \RamRob{r}{H}{\Gnp{n}{p}}^{v(H)} } \right)
    \leq \exp \left(
      \frac{ r^{v(H)} 4^{v(H)^2 - v(H)} }
        { \eta^{v(H)} } \right),
\end{equation*}
an upper bound that is a function of $H$
and $\eta = \eta(H,r)$ only.
\end{proof}

In particular, we have
{$\BlowRam{r}{\Gnp{n}{p}}{H}{t} \leq c^t$}
with high probability
for a constant $c = c(H,r)$,
whenever $p \geq C n^{-1/m_2(H)}$.


\section{Conjectures and Open Problems}
\label{sec:conj}

Throughout the previous sections,
we considered the problem of finding
a \emph{canonical} monochromatic copy of $H[t]$
in a $r$-colouring of $G[n]$,
given that {$G \arrows{r} H$}.
One may ask what happens if we allow
for a \emph{non-canonical} copy of $H[t]$.
In this case, the condition that
{$G \arrows{r} H$} is no longer necessary.
For instance, take $H = C_5$.
Since $C_5[t]$ is a subgraph of $K_3[2t]$,
it also suffices that {$G \arrows{r} K_3$}.
The growth of the associated Ramsey numbers
will also be a single exponential
by \Cref{thm:upper_bound_blow_bigger}.
A corresponding lower bound via
the Lovász Local Lemma should work in principle,
but one would have to consider the many non-canonical ways
that you can embed $C_5[t]$ into $G[n]$,
as well as their intersections.

We now come back to the original problem with canonical copies.
As already noted in \Cref{sec:intro},
the lower bound on {$\BlowRam{r}{G}{H}{t}$}
we obtain in \Cref{thm:lower_bound_blow}
does not depend on $G$ asymptotically.
Additionally, \Cref{thm:random_graph_blow}
implies an upper bound on {$\BlowRam{r}{G}{H}{t}$}
that does not depend on $G = \Gnp{n}{p}$
and $p \geq Cn^{-1/m_2(H)}$.
Given this evidence,
we conjecture that one could find
exponential upper bounds that are uniform on $G$
with {$G \arrows{r} H$}.

\begin{conjecture}
\label{conj:upper}
Let $r \geq 2$
and let $H$ be a graph.
There is a constant $c = c(H,r)$
such that if {$G \arrows{r} H$},
then {$G[c^t] \barrows{r} H[t]$}.
\end{conjecture}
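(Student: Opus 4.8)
The plan is to strengthen the deduction from Nikiforov's theorem so that the only quantity entering the final constant is a local density depending on $H$ and $r$ alone. Recall that in the proof of \Cref{thm:upper_bound_blow_bigger} the bottleneck is the value $\rho = r^{-1}\RamRob{r}{H}{G}$: from an $r$-colouring of $G[n]$ one extracts a single copy $H'$ of $H$ in $G$ and a single colour $i$ such that the colour-$i$ subgraph of $H'[n]$ has $\rho n^{v(H)}$ canonical copies of $H$, and then feeds this to \Cref{thm:nikiforov_blowup}. So it suffices to prove the following \emph{uniform local density} statement: there is $\rho_0 = \rho_0(H,r)>0$ such that, for $n$ large and every $r$-colouring of $G[n]$, some pair $(H',i)$ — a copy $H'$ of $H$ in $G$ and a colour $i$ — has at least $\rho_0 n^{v(H)}$ of the $n^{v(H)}$ transversals of $H'[n]$ monochromatic in colour $i$. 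A first reduction is to the case that $G$ is minimal $r$-Ramsey for $H$: if $G_0\subseteq G$ is a minimal subgraph with $G_0\arrows{r}H$, then every canonical monochromatic copy of $H[t]$ in $G_0[n]$ is a blowup of a copy of $H$ in $G_0\subseteq G$, hence canonical in $G[n]$, so it is enough to treat $G_0$.

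The natural first move towards the uniform local density statement is the random transversal: sampling one vertex $x_v\in V_v$ uniformly for each $v\in V(G)$ produces a uniformly random canonical copy of $G$ in $G[n]$, equivalently a random $r$-colouring $\chi'$ of $E(G)$, and for a fixed copy $H'$ of $H$ the probability that $\chi'|_{H'}$ is monochromatic in colour $i$ is exactly the transversal density we wish to control. If all these densities were below $\rho_0$, the expected number of monochromatic copies of $H$ under $\chi'$ would be below $r\rho_0\Mult{1}{H}{G}$; but forcing this to be less than $1$ — so that some transversal gives a colouring of $G$ with no monochromatic $H$, contradicting $G\arrows{r}H$ — requires $\rho_0 < 1/(r\Mult{1}{H}{G})$, which is not uniform in $G$, and replacing the first moment by \Cref{lemma:lll} only trades $\Mult{1}{H}{G}$ for the maximum number of copies of $H$ through a single vertex of $G$, which is again unbounded. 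So the real content is to rule out \emph{all} transversal densities being simultaneously small, i.e. to project a colouring of $G[n]$ with no dense monochromatic transversal structure down to a colouring of $G$ with no monochromatic $H$, not in expectation but for a genuine choice of transversal, exploiting that $n$ may be taken arbitrarily large — for instance by sampling nested sub-blowups $G[m]\subseteq G[n]$ and arguing that transversal densities cannot all deteriorate along a typical sub-blowup, or by a second-moment or container argument over the space of colourings.

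The step I expect to be the main obstacle is exactly this amplification, because the inequality we need runs against the first moment: a colouring of $G[n]$ with small transversal densities still typically contains very many monochromatic copies of $H$ once $G$ itself has many copies of $H$. The difficulty is crystallised by the minimal $r$-Ramsey graphs of Burr, Erd\H{o}s and Lov\'asz built from signal senders, where the Ramsey property is enforced by a bounded ``core'' while the signal-sender gadgets contribute an unbounded number of copies of $H$ that dilute $\RamRob{r}{H}{G}$ and every transversal density alike. A successful argument would therefore have to isolate, inside an arbitrary $r$-colouring of $G[n]$, a bounded-size robust part on which the Ramsey property localises — either via a structural description of how copies of $H$ can be distributed in a minimal $r$-Ramsey graph, or via a sparsification or regularisation of the colouring, in the spirit of the container argument of \Cref{sec:random_graphs}, reducing the general case to the quasirandom one where \Cref{thm:random_multiplicity} already supplies a $G$-independent constant. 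I would attempt the latter first, since it reuses the machinery developed here, but I do not see how to push it through in general, which is why the conjecture remains open even for $r=2$ and $H=K_3$.
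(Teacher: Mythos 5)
The statement you were asked to prove is \Cref{conj:upper}, which the paper explicitly leaves open (even for $r=2$ and $H=K_3$); there is no proof in the paper to compare yours against. Your submission is, by its own admission, not a proof either: you reduce the conjecture to a uniform local density statement (equivalently, to bounding $\rho = r^{-1}\RamRob{r}{H}{G}$ away from zero uniformly in $G$), show that the first-moment and Local Lemma versions of the random-transversal argument fail because they cost a factor of $\Mult{1}{H}{G}$, and then concede the amplification step. So the honest verdict is that the gap is the entire content of the conjecture, and you have correctly located it rather than closed it.

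That said, your diagnosis coincides with the paper's own discussion in \Cref{sec:conj}. The reduction to Ramsey-minimal $G$ is exactly the paper's observation that it suffices to bound $\inf\{\RamRob{r}{H}{G} : G \in \mathcal{M}_r(H)\}$ away from zero; and your identification of the Burr--Erd\H{o}s--Lov\'asz signal-sender graphs as the obstruction is precisely \Cref{prop:low_robustness}, which shows this infimum is $0$ for cliques, so that \Cref{thm:upper_bound_blow_bigger} alone cannot settle the conjecture. Your suggestion to fall back on the container machinery is likewise what the paper does as \emph{evidence} rather than proof: \Cref{thm:random_multiplicity} gives a $G$-independent robustness bound only for $G = \Gnp{n}{p}$ above the Ramsey threshold, yielding \Cref{thm:random_graph_blow}. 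One small caution about your framing: a uniform lower bound on the transversal density over all colourings of $G[n]$ for a single pair $(H',i)$ is essentially a uniform lower bound on $\RamRob{r}{H}{H'[n]}$-type quantities, and \Cref{prop:low_robustness} suggests that any such bound must exploit structure beyond mere counting of copies of $H$ in $G$ --- the ``bounded robust core'' you gesture at would need an actual structural theorem about Ramsey-minimal graphs, which is not currently available.
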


In our efforts to establish \Cref{conj:upper},
we obtained \Cref{thm:upper_bound_blow_bigger}.
Note that if {$G \arrows{r} H$}
has a subgraph $G' \subseteq G$
such that {$G' \arrows{r} H$},
we could apply \Cref{thm:upper_bound_blow_bigger} to $G'$
and obtain a constant $c$ as a function of $G'$.
This shows that to certify \Cref{conj:upper},
we can consider only graphs $G$
that are minimal with respect to the Ramsey property {$G \arrows{r} H$}.
Indeed, one may be tempted to show that
the robustness $\RamRob{r}{H}{G}$ is bounded away from zero
among the class of minimal graphs
{$\mathcal{M}_r(H)
  = \{ G : G \text{ minimal such that } G \arrows{r} H \}$}.
Indeed, it would be enough that
\begin{equation*}
  \inf \{ \RamRob{r}{H}{G} : G \in \mathcal{M}_r(H) \} > 0.
\end{equation*}

We will show that this is not the case in general.
For some graphs $H$,
we show that there exist minimal graphs $G \in \mathcal{M}_r(H)$
with arbitrarily low robustness.
To be able to construct such graphs,
we recall the concept of signal senders.

\begin{definition}
For a graph $H$,
a positive (negative) signal sender
$S = S^+(r, H, e, f )$
($S = S^-(r, H, e , f) $)
is a graph containing
distinguished edges $e$ and $f$
with the following properties:
\begin{enumerate}[label=(\roman*)]
  \item {$S \notarrows{r} H$},
  \item For every $r$-colouring of $S$
    in which there is no monochromatic copy of $H$,
    the edges $e$ and $f$ have the same (distinct) colours.
\end{enumerate}
\end{definition}

Burr, Erd\H{o}s and Lovász~\cite{burr_erdos_lovasz1976ramsey_type}
proved that signal senders exist
when $H$ is a clique in the case $r = 2$.
This was generalised to 3-connected graphs $H$ and $r=2$
by Burr, Ne\v{s}et\v{r}il and Rödl~\cite{burr_nesetril_rodl1985signal_senders},
and finally to 3-connected graphs $H$ and $r \geq 2$
by Rödl and Siggers~\cite{rodl_siggers2008ramsey_minimal}.
Moreover, if a negative signal sender exists for $H$,
one can concatenate two of them to obtain a positive signal sender for $H$.
By repeated concatenation of positive signal senders,
one can obtain signal senders where
the special edges $e$ and $f$ are arbitrarily far away in graph distance.
Using signal senders,
it is possible to construct explicit Ramsey-minimal graphs.

\begin{proposition}
\label{prop:low_robustness}
For integers $t,m \geq 2$,
there exists a graph $G$
that is minimal with the property that {$G \arrows{2} K_{t+1}$},
and with multiplicities $\Mult{2}{K_{t+1}}{G} = 1$
and $\Mult{1}{K_{t+1}}{G} \geq m$.
\end{proposition}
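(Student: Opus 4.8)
The plan is to build $G$ by combining a single, "efficient" Ramsey-minimal graph for $K_{t+1}$ with a large but "inert" collection of extra copies of $K_{t+1}$, gluing everything together with signal senders so that the whole object remains Ramsey-minimal while the $2$-multiplicity stays equal to $1$. The starting point is that since $K_{t+1}$ is $3$-connected (for $t+1 \geq 4$, and the case $t=2$ needs a separate small argument or the use of $r=2$ signal senders which exist for all cliques by Burr--Erd\H{o}s--Lov\'asz), negative and hence positive signal senders $S^\pm(2,K_{t+1},e,f)$ exist, and by concatenation one may take the distinguished edges $e,f$ arbitrarily far apart in graph distance; this last point is what will let us attach gadgets without creating new copies of $K_{t+1}$.

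First I would fix a Ramsey-minimal graph $G_0$ with $G_0 \arrows{2} K_{t+1}$ — for instance $K_n$ for $n = \RamN{2}{t+1}$ contains such a $G_0$ as a subgraph, or one may take the canonical Burr--Erd\H{o}s--Lov\'asz construction. In a colouring of $G_0$ with no monochromatic $K_{t+1}$ we cannot control $\Mult{2}{K_{t+1}}{G_0}$, so the second step is to pass to a graph $G_1$ whose \emph{only} monochromatic $K_{t+1}$, in the extremal colouring, sits in a controlled location. Concretely, I would take the construction certifying $G \arrows{2} K_{t+1}$ that forces a single "critical" edge whose colour is pinned: attach a positive signal sender between a fixed edge $e_0$ and each edge of a single fixed copy $K^\ast$ of $K_{t+1}$ (so in any $H$-free colouring all edges of $K^\ast$ get the same colour as $e_0$ — but that would create a monochromatic $K_{t+1}$, contradiction), arranged so that the impossibility of $2$-colouring is localized: in every $2$-colouring exactly the copies forced by the senders can be monochromatic, and a careful setup makes this exactly one copy. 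The cleanest route is to start from a minimal $G_1$ with $\Mult{2}{K_{t+1}}{G_1}=1$, which one gets by taking any minimal graph and noting that in any $2$-colouring achieving the minimum number of monochromatic $K_{t+1}$'s, minimality forces that number to be exactly $1$ (otherwise delete an edge of a monochromatic copy and stay Ramsey, contradicting minimality). So in fact \emph{every} $G \in \mathcal{M}_2(K_{t+1})$ already has $\Mult{2}{K_{t+1}}{G}=1$; the real content is making $\Mult{1}{K_{t+1}}{G} \geq m$ while keeping the graph minimal.

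The third and main step is therefore to inflate the number of (rainbow-colourable, non-monochromatic) copies of $K_{t+1}$ without destroying minimality. I would take $m-1$ pairwise-disjoint fresh copies $Q_1, \dots, Q_{m-1}$ of $K_{t+1}$, pick one edge $f_i \subseteq Q_i$ in each, and attach to $(e_0, f_i)$ a positive signal sender $S^+(2, K_{t+1}, e_0, f_i)$ whose distinguished edges are very far apart and which is vertex-disjoint from everything built so far except at $e_0$ and $f_i$. The signal sender pins the colour of $f_i$ but leaves the other $\binom{t+1}{2}-1$ edges of $Q_i$ free, so each $Q_i$ can be $2$-coloured with no monochromatic $K_{t+1}$ (e.g.\ colour one further edge of $Q_i$ with the opposite colour); thus $G \not\arrows{2} K_{t+1}$ is preserved locally, and globally one checks the colourings glue: the senders only constrain colours relative to $e_0$, so a good colouring of $G_1$ extends to a good colouring of $G$. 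Meanwhile $\Mult{1}{K_{t+1}}{G} \geq \Mult{1}{K_{t+1}}{G_1} + (m-1) \geq m$, since the $Q_i$ are new copies and — here is where the "far apart" property is used — no \emph{new} copies of $K_{t+1}$ are created inside the signal senders or across the gluing points, because $K_{t+1}$ is $2$-connected so any copy of it lies within a single $2$-connected block, and the senders can be chosen with no $K_{t+1}$ meeting $e_0$ or $f_i$ other than the obvious ones.

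The last step is minimality of $G$. Given an edge $xy$ of $G$, I must exhibit a good $2$-colouring of $G - xy$. If $xy$ lies in one of the signal senders, use that $S \not\arrows{2} K_{t+1}$ together with the fact that signal senders are themselves built to be minimal-friendly (more carefully: deleting an edge of a sender either kills its pinning function — then both orientations of $f_i$ become available and we can colour $Q_i$ freely — or leaves a strictly weaker constraint, still satisfiable). If $xy$ lies in $G_1$, use minimality of $G_1$ to colour $G_1 - xy$, then propagate through the senders. If $xy$ lies in some $Q_i$ but is not $f_i$, then $Q_i - xy$ is $K_{t+1}$-free, so colour it monochromatically in the colour dictated by $f_i$ and colour the rest of $G$ by a fixed good colouring of $G_1$ extended through the senders. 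The main obstacle — and the step I expect to require the most care — is the bookkeeping in this minimality check: one must ensure the signal senders are concatenated so that every single edge is "used", i.e.\ that $G$ has no redundant edge, which is exactly the classical subtlety in the Burr--Erd\H{o}s--Lov\'asz / Burr--Ne\v{s}et\v{r}il--R\"odl arguments, and one should either invoke their constructions as a black box (they produce senders all of whose edges are essential) or spell out the deletion argument sender-by-sender.
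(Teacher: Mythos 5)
There are two genuine gaps here, and the second one is fatal to the whole strategy. First, your claim that every $G \in \mathcal{M}_2(K_{t+1})$ automatically satisfies $\Mult{2}{K_{t+1}}{G} = 1$ is false. The graph $K_6$ is Ramsey-minimal for $K_3$ (the size-Ramsey number of $K_3$ is $15 = e(K_6)$, so $K_6 - e \notarrows{2} K_3$ for every edge $e$), yet Goodman's theorem gives $\Mult{2}{K_3}{K_6} = 2$. Your justification ("otherwise delete an edge of a monochromatic copy and stay Ramsey") is a non sequitur: a $K_{t+1}$-free colouring of $G - e$ need bear no relation to the minimising colouring of $G$, so the fact that every colouring of $G$ has at least two monochromatic copies says nothing about whether $G - e$ is still Ramsey.

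Second, and decisively: the graph you build cannot be Ramsey-minimal. You start from $G_1$ with $G_1 \arrows{2} K_{t+1}$ and attach the new cliques $Q_i$ and their signal senders so that they meet $G_1$ only at $e_0$. Then for any edge $xy$ outside $G_1$, the graph $G - xy$ still contains $G_1$ as a subgraph, hence $G - xy \arrows{2} K_{t+1}$; every such edge is redundant, so $G \notin \mathcal{M}_2(K_{t+1})$. Your own minimality check betrays this: in the cases $xy \in Q_i$ and $xy$ in a sender, you propose to extend "a fixed good colouring of $G_1$", but no such colouring exists precisely because $G_1$ is Ramsey. The paper's construction avoids this trap by making the base graph (a copy of $K_{t^2+1}$ together with a pendant edge $e$) \emph{not} Ramsey for $K_{t+1}$ on its own: the Ramsey property only emerges from the interaction between the senders and the clique, so every sender edge is essential to it. The copy count is then inflated not by gluing on inert cliques but by replacing one sender with a long concatenation of senders, each of which necessarily contains copies of $K_{t+1}$; this preserves minimality and $\Mult{2}{K_{t+1}}{G} = 1$ while driving $\Mult{1}{K_{t+1}}{G}$ above $m$.
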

\begin{proof}
We follow the construction
of Burr, Erd\H{o}s and Lovász~\cite{burr_erdos_lovasz1976ramsey_type}
of a family of Ramsey minimal graphs for cliques and two colours.
Let $F$ be the complete graph $K_{t^2 +1}$
and let $v$ be one of its vertices.
Consider $t$ disjoint cliques $Q_1, \dotsc, Q_t$ in $F-v$,
each of size $t$.
We construct the graph $G$ as follows.
Start with a copy of $F$,
together with an edge $e$ disjoint from $F$.
For every edge $f = \{x,y\}$
with $x \in Q_i$, $y \in Q_j$, $i \neq j$,
add a disjoint minimal positive signal sender $S^+(r,K_{t+1},e,f)$.
We require the distance from $e$ to $f$ to be at least $3$,
in order to guarantee that there is no copy of $K_{t+1}$ in $G$
other than the copies completely inside $F$
or those completely inside one of the signal senders.

Now, note that {$G \arrows{2} K_{t+1}$}.
Indeed, suppose that we have a red--blue colouring
of the edges of $G$ without monochromatic $K_{t+1}$.
Additionally, suppose that $e$ is red.
By the definition of the signal senders,
all the edges between $Q_i$'s are also red.
This forces all $Q_i$ to be blue cliques.
Now consider the edges incident to $v$.
If $v$ sends only blue edges to some $Q_i$,
then $Q_i$ with $v$ forms a blue $K_{t+1}$.
On the other hand,
if $v$ sends a red edge to each of the $Q_i$,
then $v$ together with these red neighbours forms a red $K_{t+1}$,
so {$G \arrows{2} K_{t+1}$}.
Furthermore, if $v$ sends exactly one red edge to each one of the $Q_i$,
then this colouring has precisely one monochromatic $K_{t+1}$,
so $\Mult{2}{K_{t+1}}{G} = 1$.

It is not hard to see that $G$ is also Ramsey minimal.
Just note that the signal senders we added
are minimal with respect to the signal sender property,
so if the remove an edge from it,
we can find a colouring $c$ avoiding monochromatic $K_{t+1}$
and with $c(e) \neq c(f)$.

Consider now the family $G_n$ of graphs that are constructed just as $G$,
but we replace one of the minimal positive signal senders $S$
from $e$ to $f$ by a concatenation of $n$ more such signal senders.
More precisely,
we add $n$ new edges $g_1, \dotsc, g_n$ to $G$
and then add internally disjoint copies of $S$
from $e$ to $g_1$, from $g_i$ to $g_{i+1}$ and from $g_n$ to $f$.
Such concatenation is also a minimal positive signal sender.
Furthermore,
the argument above repeats without modifications
and shows that $\Mult{2}{K_{t+1}}{G_n} = 1$
and that $G_n$ is Ramsey minimal.
Finally, observe that
$\Mult{1}{K_{t+1}}{G_n}
  = \Mult{1}{K_{t+1}}{G} + n\Mult{1}{K_{t+1}}{S}
  \to \infty$.
\end{proof}

In particular,
there are graphs $G$ with arbitrarily low robustness
$\RamRob{2}{K_{t+1}}{G}$.
This shows that \Cref{thm:upper_bound_blow_bigger}
is not enough to settle \Cref{conj:upper} in general.
Even so,
\begin{equation*}
\inf \{ \RamRob{r}{H}{G} : G \in \mathcal{M}_r(H) \}
\end{equation*}
can be positive for some graphs $H$.
An easy case for that is when $H$ is $r$-Ramsey-finite,
that is, the family $\mathcal{M}_r(H)$ is finite.
It can be shown from \Cref{thm:ramsey_threshold},
however, that $H$ is not $r$-Ramsey-finite for all $r$
whenever $H$ has a cycle.
For $r = 2$, it is known for instance
that $H$ Ramsey-finite when $H$ is a star with an odd number of
edges~\cite{burr_erdos_faudree_rousseau_schelp1981ramsey_minimal_starforests}
and when $H$ is a matching~\cite{burr_erdos_faudree_schelp1978ramsey_finite}.
We conjecture that the only possible reason
for $\RamRob{r}{H}{G}$ to be bounded away from zero is when
$\mathcal{M}_r(H)$ is finite.

\begin{conjecture}
If $H$ is $r$-Ramsey-infinite,
then
\begin{equation*}
  \inf \{ \RamRob{r}{H}{G} : G \in \mathcal{M}_r(H) \} = 0.
\end{equation*}
\end{conjecture}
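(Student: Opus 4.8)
The plan is to leverage the signal sender machinery already invoked in \Cref{prop:low_robustness}, generalising the clique construction to arbitrary $r$-Ramsey-infinite graphs $H$. The key structural input is the theorem of Rödl and Siggers, which guarantees negative (hence positive) signal senders $S^\pm(r,H,e,f)$ whenever $H$ is $3$-connected, and in particular allows us to push the distinguished edges $e$ and $f$ arbitrarily far apart by concatenation. First I would handle the $3$-connected case directly: take some $G_0 \in \mathcal{M}_r(H)$ (nonempty since $H$ is $r$-Ramsey-infinite, a fortiori it is not $r$-Ramsey-finite), fix a distinguished edge $f$ in $G_0$, and attach a long concatenation of positive signal senders from a fresh edge $e$ to $f$, with pairwise distance at least $v(H)$ so that no new copy of $H$ is created straddling the gluing. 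One must then verify, exactly as in \Cref{prop:low_robustness}, that the resulting graph $G_n$ still satisfies $G_n \arrows{r} H$, is Ramsey-minimal, and has $\Mult{r}{H}{G_n}$ bounded (the signal senders contribute no monochromatic copies in any good colouring, and the minimal colouring of $G_0$ extends), while $\Mult{1}{H}{G_n} \geq \Mult{1}{H}{G_0} + n\cdot\Mult{1}{H}{S}$ grows without bound. Letting $n\to\infty$ forces $\RamRob{r}{H}{G_n}\to 0$.

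For the general case, the main obstacle is that signal senders are only known to exist for $3$-connected $H$ (or cliques, or a handful of other special cases), so one cannot invoke the Rödl–Siggers theorem unconditionally. I would try to circumvent this by a ``copy-multiplication'' argument that does not require signal senders: given \emph{any} $G_0\in\mathcal{M}_r(H)$ that happens to be infinite in number (so $H$ being $r$-Ramsey-infinite just says such a family exists), one would like to manufacture from a single $G_0$ a sequence of minimal graphs with the same bounded value of $\Mult{r}{H}{\cdot}$ but unboundedly many total copies of $H$. The natural move is: take a disjoint union of $G_0$ with a large $H$-rich but Ramsey-trivial gadget, glued along a single edge in a way that is ``invisible'' to monochromatic copies—but making the union Ramsey-minimal again is delicate, since disjoint-type constructions tend to destroy minimality. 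This is precisely where signal senders were doing the work in the clique case, by acting as rigid connectors. Absent a general existence theorem for signal senders, I expect the clean statement to require either restricting to $H$ for which signal senders are known, or an entirely different argument; so the \textbf{hard part} is producing, for a general $r$-Ramsey-infinite $H$, a family of minimal graphs in which $\Mult{1}{H}{\cdot}$ diverges while $\Mult{r}{H}{\cdot}$ stays bounded.

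A more tractable intermediate target, which I would pursue first, is the following: prove the conjecture for every $H$ that is $3$-connected (covering cliques, $K_{s,t}$ with $s,t\geq 3$, and more), using the construction above. This already shows the infimum is $0$ for a broad class, matching the spirit of \Cref{prop:low_robustness}. The remaining cases—$H$ with a cutvertex or a small separator but still $r$-Ramsey-infinite, e.g.\ long cycles or theta graphs—would then be attacked by decomposing $H$ at its separators and building ``relative'' signal senders along the lines of Burr--Nešetřil--Rödl, or by an ad hoc amalgamation exploiting the cycle structure (since by the remark after \Cref{thm:ramsey_threshold}, having a cycle already guarantees $r$-Ramsey-infiniteness, the cyclic structure is always available to exploit). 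I would expect the write-up to proceed: (1) recall signal sender existence and concatenation; (2) for $3$-connected $H$, run the $G_n$ construction and check minimality, $\Mult{r}{H}{G_n}=O(1)$, $\Mult{1}{H}{G_n}\to\infty$; (3) extend to $H$ with separators via relative senders; (4) conclude $\RamRob{r}{H}{G_n}\to 0$ in all cases, giving infimum zero.
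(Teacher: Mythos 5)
This statement is not proved in the paper: it is explicitly posed as a conjecture and left open. The closest the paper comes is \Cref{prop:low_robustness}, which settles only the case $H=K_{t+1}$, $r=2$, by the Burr--Erd\H{o}s--Lov\'asz construction, and the surrounding discussion makes clear that the general statement (even for $3$-connected $H$) is not claimed. Your proposal is honest that the general case is out of reach without signal senders, so you have not closed the conjecture either; but it is worth recording that even your ``tractable'' $3$-connected case contains a concrete error.

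The flaw is in the very first step of your construction: you take $G_0\in\mathcal{M}_r(H)$ and \emph{attach} a chain of positive signal senders to a distinguished edge $f$ of $G_0$. The resulting graph $G_n$ then properly contains $G_0$, and since {$G_0 \arrows{r} H$} already holds, deleting any edge of the attached senders leaves a graph that still contains $G_0$ and hence still arrows $H$. So $G_n$ is never Ramsey-minimal, and your family lies outside $\mathcal{M}_r(H)$, which is exactly the set over which the infimum is taken. The paper's construction in \Cref{prop:low_robustness} avoids this by starting from a base graph $F=K_{t^2+1}$ that does \emph{not} by itself arrow $K_{t+1}$; the signal senders are load-bearing for the Ramsey property (they force the inter-clique edges to share the colour of $e$), which is why removing an edge from any sender destroys the arrowing and minimality survives. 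Any repair of your argument must likewise make the appended senders essential, e.g.\ by choosing a non-Ramsey core and letting the senders synchronise colours, rather than grafting them onto a graph that is already minimal. Beyond that, your step (3) (``relative senders'' for $H$ with small separators) is precisely the open content of the conjecture and is not supplied by Burr--Ne\v{s}et\v{r}il--R\"odl or R\"odl--Siggers, whose existence theorems require $3$-connectivity.
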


The study of this quantity fits well within
the framework of minimisation problems on Ramsey-minimal graphs.
Given a graph parameter $F$,
one can investigate the quantity
\begin{equation*}
  \inf \{ F(G) : G \in \mathcal{M}_r(H) \}.
\end{equation*}
When $F$ is the number of vertices,
we obtain the Ramsey numbers $\Ram{r}{H}$,
and when $F$ is the number of edges,
size-Ramsey numbers
$\SizeRam{r}{H}$~\cite{erdos_faudree_rousseau_schelp1978size_ramsey}.
Other parameters such as chromatic number,
minimal and maximum degree have also been
studied~\cite{burr_erdos_lovasz1976ramsey_type}.

The study of the Ramsey multiplicities is interesting in itself.
Goodman~\cite{goodman1959mono_triangles} initiated the subject\
by determining $\Mult{2}{K_3}{K_n}$ precisely for all $n$.
A survey of Burr and Rosta~\cite{burr_rosta1980ramsey_multiplicity}
collects several results on multiplicities.
They show that $\RamRob{r}{H}{K_n}$ is monotonously nondecreasing in $n$
and bounded above by $r^{1-e(H)}$.
Thus, it is natural to define the
\emph{Ramsey multiplicity constant} of $H$
as the following converging limit
\begin{equation*}
  C_r(H) = \lim_{n \to \infty} \RamRob{r}{H}{K_n}.
\end{equation*}
Erd\H{o}s~\cite{erdos1962number_subgraphs}
conjectured that $C_2(K_t) = 2^{1-\binom{t}{2}}$
and Goodman's result implies that this is true for $t = 3$.
Burr and Rosta further conjectured that $C_2(H) = 2^{1-e(H)}$
for all graphs $H$.
Erd\H{o}s' conjecture was later disproved
by Thomason~\cite{thomason1989disproof_erdos}
for $H = K_{t}$, $t \geq 4$.

Similar questions can be raised for different ambient graphs
in place of the complete graph $K_n$.
Erd\H{o}s and Moon~\cite{erdos_moon1964subgraphs_bip},
for instance, have shown that
\begin{equation*}
  \lim_{\substack{n \to \infty \\ m \to \infty}}
    \RamRob{2}{K_{a,b}}{K_{n,m}}
    = 2^{1-ab},
\end{equation*}
where they considered copies of $K_{a,b}$ in $K_{n,m}$
where the part of size $a$ is sitting inside the part of size $n$.
This confirms Erd\H{o}s conjecture in the bipartite setting.
Another natural setting to consider is a random graph.
\Cref{thm:random_multiplicity} shows that
$\RamRob{r}{H}{\Gnp{n}{p}} \geq \eta > 0$
with high probability,
given that $p \geq Cn^{-1/m_2(H)}$.

Finally, we note that while the upper bound on the blowup Ramsey numbers
provided on \Cref{thm:upper_bound_blow_bigger} is of the form $c^t$,
the constant $c$ can be quite large.
It is natural to ask for more effective upper bounds on these numbers.
We find the following to be specially interesting.

\begin{problem}
What is the smallest $n$
such that {$K_6[n] \barrows{2} K_3[t]$}?
\end{problem}

All we know at the moment is that
\Cref{thm:upper_bound_blow_bigger,thm:lower_bound_blow}
imply the weak bounds:
$2^t \leq n \leq e^{(3.3 \times 10^7)t}$.


\section*{Acknowledgement}

We would like to thank Rob Morris for his encouragement
and invaluable suggestions.
We also thank both referees for their careful reading and helpful comments.
This research was partially supported by CAPES, Brazil.


\begin{bibdiv}
\begin{biblist}

\bib{alon_spencer2016book}{book}{
      author={Alon, N.},
      author={Spencer, J.},
       title={{The Probabilistic Method}},
     edition={4},
   publisher={Wiley},
        date={2016},
        ISBN={9781119061953},
}

\bib{balogh_morris_samotij2015containers}{article}{
      author={Balogh, J.},
      author={Morris, R.},
      author={Samotij, W.},
       title={Independent sets in hypergraphs},
        date={2015},
     journal={Journal of the American Mathematical Society},
      volume={28},
      number={3},
       pages={669\ndash 709},
}

\bib{beineke_schwenk1976bipartite_ramsey}{article}{
      author={Beineke, L.W.},
      author={Schwenk, A.J.},
       title={On a bipartite form of the {Ramsey} problem},
        date={1976},
     journal={Proceedings of the Fifth British Combinatorial Conference},
      number={XV},
       pages={17\ndash 22},
}

\bib{bollobas2001book_randomgraph}{book}{
      author={Bollobás, B.},
       title={Random graphs},
     edition={2},
      series={Cambridge Studies in Advanced Mathematics},
   publisher={Cambridge University Press},
        date={2001},
}

\bib{burr_erdos1983generalizations}{article}{
      author={Burr, S.A.},
      author={Erd\H{o}s, P.},
       title={Generalizations of a {Ramsey}-theoretic result of {Chv{\'a}tal}},
        date={1983},
     journal={Journal of Graph Theory},
      volume={7},
      number={1},
       pages={39\ndash 51},
}

\bib{burr_erdos_faudree_rousseau_schelp1981ramsey_minimal_starforests}{article}{
      author={Burr, S.A.},
      author={Erd\H{o}s, P.},
      author={Faudree, R.J.},
      author={Rousseau, C.C.},
      author={Schelp, R.H.},
       title={{Ramsey}-minimal graphs for star-forests},
        date={1981},
        ISSN={0012-365X},
     journal={Discrete Mathematics},
      volume={33},
      number={3},
       pages={227\ndash 237},
}

\bib{burr_erdos_faudree_schelp1978ramsey_finite}{inproceedings}{
      author={Burr, S.A.},
      author={Erd\H{o}s, P.},
      author={Faudree, R.J.},
      author={Schelp, R.H.},
       title={A class of {Ramsey}-finite graphs},
        date={1978},
   booktitle={{Proceedings of the 9th SE Conference on Combinatorics, Graph
  Theory, and Computing}},
       pages={171\ndash 178},
}

\bib{burr_erdos_lovasz1976ramsey_type}{article}{
      author={Burr, S.A.},
      author={Erd\H{o}s, P.},
      author={Lovász, L.},
       title={On graphs of {Ramsey} type},
        date={1976},
     journal={Ars Combinatoria},
      volume={1},
      number={1},
       pages={167\ndash 190},
}

\bib{burr_nesetril_rodl1985signal_senders}{article}{
      author={Burr, S.A.},
      author={Nešetřil, J.},
      author={Rödl, V.},
       title={On the use of senders in generalized {Ramsey} theory for graphs},
        date={1985},
     journal={Discrete Mathematics},
      volume={54},
       pages={1\ndash 13},
}

\bib{burr_rosta1980ramsey_multiplicity}{article}{
      author={Burr, S.A.},
      author={Rosta, V.},
       title={On the {Ramsey} multiplicities of graphs -- problems and recent
  results},
        date={1980},
     journal={Journal of Graph Theory},
      volume={4},
      number={4},
       pages={347\ndash 361},
}

\bib{conlon2008bipartite_ramsey}{article}{
      author={Conlon, D.},
       title={A new upper bound for the bipartite {R}amsey problem},
        date={2008},
     journal={Journal of Graph Theory},
      volume={58},
      number={4},
       pages={351\ndash 356},
}

\bib{conlon2009diagonal_ramsey}{article}{
      author={Conlon, D.},
       title={A new upper bound for diagonal {Ramsey} numbers.},
        date={2009},
        ISSN={0003-486X; 1939-8980/e},
     journal={Annals of Mathematics},
      volume={170},
      number={2},
       pages={941\ndash 960},
}

\bib{erdos1947some_remarks}{article}{
      author={Erd\H{o}s, P.},
       title={Some remarks on the theory of graphs},
        date={1947},
     journal={Bulletin of the American Mathematical Society},
      volume={53},
      number={4},
       pages={292\ndash 294},
}

\bib{erdos1962number_subgraphs}{article}{
      author={Erd\H{o}s, P.},
       title={On the number of complete subgraphs contained in certain graphs},
        date={1962},
     journal={Magyar Tud. Akad. Mat. Kutató Int. Közl},
      volume={7},
      number={3},
       pages={459\ndash 464},
}

\bib{erdos_faudree_rousseau_schelp1978size_ramsey}{article}{
      author={Erd\H{o}s, P.},
      author={Faudree, R.J.},
      author={Rousseau, C.C.},
      author={Schelp, R.H.},
       title={The size {Ramsey} number},
        date={1978},
     journal={Periodica Mathematica Hungarica},
      volume={9},
      number={1-2},
       pages={145\ndash 161},
}

\bib{erdos_lovasz1975problems}{article}{
      author={Erd\H{o}s, P.},
      author={Lov{\'a}sz, L.},
       title={Problems and results on 3-chromatic hypergraphs and some related
  questions},
        date={1975},
     journal={Infinite and Finite sets},
      volume={10},
      number={2},
       pages={609\ndash 627},
}

\bib{erdos_moon1964subgraphs_bip}{article}{
      author={Erd\H{o}s, P.},
      author={Moon, J.W.},
       title={On subgraphs of the complete bipartite graph},
        date={1964},
     journal={Canad. Math. Bull},
      volume={7},
      number={1},
       pages={35\ndash 39},
}

\bib{erdos_szekeres1935combinatorial}{article}{
      author={Erd\H{o}s, P.},
      author={Szekeres, G.},
       title={A combinatorial problem in geometry},
        date={1935},
     journal={Compositio Mathematica},
      volume={2},
       pages={463\ndash 470},
         url={http://eudml.org/doc/88611},
}

\bib{frankl_rodl1986trianglefree}{article}{
      author={Frankl, P.},
      author={R{\"o}dl, V.},
       title={Large triangle-free subgraphs in graphs without {$K_4$}},
        date={1986},
     journal={Graphs and Combinatorics},
      volume={2},
      number={1},
       pages={135\ndash 144},
}

\bib{frieze_karonski2016random_book}{book}{
      author={Frieze, A.},
      author={Karoński, M.},
       title={Introduction to random graphs},
   publisher={Cambridge University Press},
        date={2016},
}

\bib{goodman1959mono_triangles}{article}{
      author={Goodman, A.W.},
       title={On sets of acquaintances and strangers at any party},
        date={1959},
        ISSN={00029890, 19300972},
     journal={The American Mathematical Monthly},
      volume={66},
      number={9},
       pages={778\ndash 783},
         url={http://www.jstor.org/stable/2310464},
}

\bib{graham_rodl1987ramsey_numbers}{article}{
      author={Graham, R.L.},
      author={Rödl, V.},
       title={Numbers in {Ramsey} theory},
        date={1987},
     journal={Surveys in Combinatorics},
      volume={123},
       pages={111\ndash 153},
}

\bib{hattingh_henning1998bipartite_ramsey}{article}{
      author={Hattingh, J.H.},
      author={Henning, M.A.},
       title={Bipartite {Ramsey} theory},
        date={1998},
     journal={Utilitas Mathematica},
      volume={53},
       pages={217\ndash 230},
}

\bib{janson_luczak_rucinski2011random_book}{book}{
      author={Janson, S.},
      author={\L{}uczak, T.},
      author={Ruciński, A.},
       title={Random graphs},
   publisher={John Wiley \& Sons},
        date={2011},
      volume={45},
}

\bib{luczak_rucinski_voigt1992ramsey_random}{article}{
      author={\L{}uczak, T.},
      author={Ruciński, A.},
      author={Voigt, B.},
       title={{Ramsey} properties of random graphs},
        date={1992},
     journal={Journal of Combinatorial Theory, Series B},
      volume={56},
      number={1},
       pages={55\ndash 68},
}

\bib{nenadov_steger2016ramsey}{article}{
      author={Nenadov, R.},
      author={Steger, A.},
       title={A short proof of the random {Ramsey} theorem},
        date={2016},
     journal={Combinatorics, Probability and Computing},
      volume={25},
      number={1},
       pages={130\ndash 144},
}

\bib{nikiforov2008blowup_general}{article}{
      author={Nikiforov, V.},
       title={Graphs with many copies of a given subgraph},
        date={2008},
     journal={The Electronic Journal of Combinatorics},
      volume={15},
      number={1},
       pages={6},
}

\bib{nikiforov2008blowup_cliques}{article}{
      author={Nikiforov, V.},
       title={Graphs with many r-cliques have large complete r-partite
  subgraphs},
        date={2008},
     journal={Bulletin of the London Mathematical Society},
      volume={40},
      number={1},
       pages={23\ndash 25},
}

\bib{nikiforov_rousseau2009goodness}{article}{
      author={Nikiforov, V.},
      author={Rousseau, C.C.},
       title={{Ramsey} goodness and beyond},
        date={2009},
        ISSN={1439-6912},
     journal={Combinatorica},
      volume={29},
      number={2},
       pages={227\ndash 262},
         url={https://doi.org/10.1007/s00493-009-2409-2},
}

\bib{ramsey1930original}{article}{
      author={Ramsey, F.P.},
       title={On a problem of formal logic},
        date={1930},
     journal={Proceedings of the London Mathematical Society},
      volume={2},
      number={1},
       pages={264\ndash 286},
}

\bib{rodl_rucinski1993ramsey_lower}{article}{
      author={Rödl, V.},
      author={Ruciński, A.},
       title={Lower bounds on probability thresholds for {Ramsey} properties},
        date={1993},
     journal={Combinatorics, Paul Erd\H{o}s is eighty},
      volume={1},
       pages={317\ndash 346},
}

\bib{rodl_rucinski1994ramsey_random}{article}{
      author={Rödl, V.},
      author={Ruciński, A.},
       title={Random graphs with monochromatic triangles in every edge
  coloring},
        date={1994},
     journal={Random Structures \& Algorithms},
      volume={5},
      number={2},
       pages={253\ndash 270},
}

\bib{rodl_rucinski1995ramsey_threshold}{article}{
      author={Rödl, V.},
      author={Ruciński, A.},
       title={Threshold functions for {Ramsey} properties},
        date={1995},
     journal={Journal of the American Mathematical Society},
      volume={8},
      number={4},
       pages={917\ndash 942},
}

\bib{rodl_siggers2008ramsey_minimal}{article}{
      author={Rödl, V.},
      author={Siggers, M.},
       title={On {Ramsey} minimal graphs},
        date={2008},
     journal={SIAM Journal on Discrete Mathematics},
      volume={22},
      number={2},
       pages={467\ndash 488},
}

\bib{sah2020ramsey}{article}{
      author={Sah, A.},
       title={Diagonal ramsey via effective quasirandomness},
        date={2020},
     journal={arXiv preprint arXiv:2005.09251},
         url={https://arxiv.org/pdf/2005.09251.pdf},
}

\bib{saxton_thomason2015containers}{article}{
      author={Saxton, D.},
      author={Thomason, A.},
       title={Hypergraph containers},
        date={2015},
        ISSN={1432-1297},
     journal={Inventiones mathematicae},
      volume={201},
      number={3},
       pages={925\ndash 992},
         url={https://doi.org/10.1007/s00222-014-0562-8},
}

\bib{saxton_thomason2016online_containers}{article}{
      author={Saxton, D.},
      author={Thomason, A.},
       title={Online containers for hypergraphs, with applications to linear
  equations},
        date={2016},
     journal={Journal of Combinatorial Theory, Series B},
      volume={121},
       pages={248\ndash 283},
}

\bib{spencer1975ramsey_lower}{article}{
      author={Spencer, J.},
       title={{Ramsey}'s theorem — a new lower bound},
        date={1975},
        ISSN={0097-3165},
     journal={Journal of Combinatorial Theory, Series A},
      volume={18},
      number={1},
       pages={108\ndash 115},
  url={http://www.sciencedirect.com/science/article/pii/0097316575900710},
}

\bib{thomason1998upper_ramsey}{article}{
      author={Thomason, A.},
       title={An upper bound for some {Ramsey} numbers},
        date={1988},
     journal={Journal of graph theory},
      volume={12},
      number={4},
       pages={509\ndash 517},
}

\bib{thomason1989disproof_erdos}{article}{
      author={Thomason, A.},
       title={A disproof of a conjecture of {Erd\H{o}s} in {Ramsey} theory},
        date={1989},
     journal={Journal of the London Mathematical Society},
      volume={s2-39},
      number={2},
       pages={246\ndash 255},
}

\end{biblist}
\end{bibdiv}


\appendix
\section{Proof of Theorem~\ref{thm:nikiforov_blowup}}
\label{sec:appendix}

In this appendix,
we provide a proof of \Cref{thm:nikiforov_blowup}.
Our proof is essentially the same as that of
Nikiforov~\cite{nikiforov2008blowup_cliques,nikiforov2008blowup_general},
with some modifications.
Following his strategy,
we deduce it from a routine lemma,
adjusted for our purposes:

\begin{lemma}
\label{lem:nikiforov}
Let $k \geq 2$,
let $0 < \rho \leq 1$,
and let $F$ be a bipartite graph with parts $A$ and $B$.
If $e(F) \geq (\rho/2) |A| |B|$
and $\rho |A| /4 + 1
  \geq \lfloor \rho^k 4^{-k^2 + k} \log |B| \rfloor \geq 1$,
then $F$ contains a $K_2[s,t]$
with parts $A_0 \subseteq A$
and $B_0 \subseteq B$,
such that
\begin{equation*}
  s = |A_0|  = \lfloor \rho^k 4^{-k^2 + k} \log |B| \rfloor
  \qquad \text{and} \qquad
  t = |B_0| \geq |B|^{1-\rho^{k-1}}.
\end{equation*}
\end{lemma}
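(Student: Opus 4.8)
The plan is to count, for a uniformly random subset $B_0 \subseteq B$ of size $s$, the expected number of vertices in $A$ that are completely joined to $B_0$, and then to show this expectation is large enough to force a vertex set $A_0$ of the desired size. For a fixed vertex $a \in A$, write $d(a) = |N_F(a) \cap B|$ for its degree into $B$. The probability that $B_0$ lies entirely inside $N_F(a)$ is $\binom{d(a)}{s} / \binom{|B|}{s}$, which by convexity of $x \mapsto \binom{x}{s}$ and the degree sum $\sum_{a} d(a) = e(F) \geq (\rho/2)|A||B|$ is minimised when all degrees equal $(\rho/2)|B|$. So the expected number of vertices of $A$ joined to all of $B_0$ is at least $|A| \binom{(\rho/2)|B|}{s} / \binom{|B|}{s}$.

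The next step is to lower bound this ratio of binomial coefficients. Using $\binom{d}{s}/\binom{|B|}{s} \geq \bigl( (d-s)/|B| \bigr)^s$ with $d = (\rho/2)|B|$, and using the hypothesis $\rho|A|/4 + 1 \geq s$ (equivalently $s \leq \rho|A|/4 + 1$) together with $s \geq 1$ to control the subtracted $s$, one gets a bound of roughly $|A| (\rho/4)^s$ on the expectation (the constant $\rho/2$ degrades to $\rho/4$ precisely because we absorb the $-s$ term; this is where the hypothesis relating $s$ to $|A|$ is used). Plugging in $s = \lfloor \rho^k 4^{-k^2+k} \log|B| \rfloor$ and taking logarithms, $\log\bigl( |A|(\rho/4)^s \bigr) \geq \log|A| + s\log(\rho/4)$; since $s \leq \rho^k 4^{-k^2+k}\log|B|$ and $\log(\rho/4) \geq \log(1/4) = -2\log 2 \geq -4^{?}$, one checks $s |\log(\rho/4)| \leq \rho^{k-1}\log|B|$ after bounding $\rho \cdot 4^{-k^2+k}\cdot 2\log 2 \leq 4^{-k^2+k+1} \leq 1$ for $k \geq 2$ — so the expectation is at least $|B|^{-\rho^{k-1}}|A| \cdot (\text{something} \geq 1)$, hence at least $|B|^{1-\rho^{k-1}}$ once we also note $|A| \geq |B|$ is not needed; rather we only need the count to exceed $|B|^{1-\rho^{k-1}}$, which follows since in the regime where $s \geq 1$ is meaningful $|B|$ is large and $\log|A|$ is nonnegative.

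Therefore there is some choice of $B_0$ with $|B_0| = s$ for which at least $t \defined |B|^{1-\rho^{k-1}}$ vertices of $A$ are completely joined to $B_0$; call this set $A_0'$, pick $A_0 \subseteq A_0'$ of size exactly $s$ (possible since $t \geq s$, which holds because $|B|^{1-\rho^{k-1}}$ dominates a polylogarithmic quantity for large $|B|$), and relabel so that $A_0$ has size $s$ and $B_0$ has size $\geq t$; then $F[A_0, B_0] \supseteq K_2[s,t]$, exactly as claimed after swapping the roles so the smaller side sits in $A$.

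The main obstacle I expect is the bookkeeping in the second step: tracking the degradation from $(\rho/2)^s$ to $(\rho/4)^s$ via the crude bound $\binom{d}{s}/\binom{|B|}{s} \geq ((d-s)/|B|)^s$, and then verifying the inequality $s|\log(\rho/4)| \leq \rho^{k-1}\log|B|$ with the specific constant $4^{-k^2+k}$ — one must be careful that the factor $4^{-k^2+k}$ is chosen so that $\rho^k 4^{-k^2+k} \cdot 2\log 2 \leq \rho^{k-1} \cdot 4^{-k^2+k+1} \leq \rho^{k-1}$, and that the floor in the definition of $s$ does not spoil the estimate. The hypothesis $\rho|A|/4 + 1 \geq s \geq 1$ is exactly what is needed to make the $(d-s)$ term survive and to ensure $s$ is a legitimate sample size, so it should be invoked cleanly at that point.
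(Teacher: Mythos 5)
There is a genuine gap: you have transposed the two sides of the bipartite graph, and the argument does not survive the transposition. You sample a random $s$-subset $B_0\subseteq B$ and count the vertices of $A$ joined to all of $B_0$; this yields (at best) a complete bipartite subgraph whose \emph{small} part of size $s$ lies in $B$ and whose \emph{large} part lies in $A$. The lemma asserts the opposite: $|A_0|=s$ with $A_0\subseteq A$, and $|B_0|\ge |B|^{1-\rho^{k-1}}$ with $B_0\subseteq B$. Your closing "relabel so that $A_0$ has size $s$ and $B_0$ has size $\ge t$" is not a legitimate move — the parts are not interchangeable, and in the application inside the induction for Theorem~\ref{thm:nikiforov_blowup} the set $A$ (disjoint copies of $H-v$) has size only logarithmic in $|B|=n$, so no subset of $A$ of size $|B|^{1-\rho^{k-1}}$ can exist. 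The quantitative estimate breaks for the same reason: your expectation is bounded below by roughly $|A|(\rho/4)^s$, and since $|A|$ may be polylogarithmic in $|B|$ this is nowhere near $|B|^{1-\rho^{k-1}}$; the assertion near the end that the count "exceeds $|B|^{1-\rho^{k-1}}$ ... since $|B|$ is large and $\log|A|$ is nonnegative" is a non sequitur. Relatedly, the hypothesis $s\le \rho|A|/4+1$ is tailored to controlling the ratio $\binom{(\rho/2)|A|}{s}/\binom{|A|}{s}$; in your orientation you need to control $\binom{(\rho/2)|B|}{s}/\binom{|B|}{s}$, for which the stated hypothesis gives nothing in general.

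The fix is simply to run your own idea on the other side, which then coincides with the paper's proof (phrased probabilistically rather than by summation): pick a uniformly random $s$-subset $A_0\subseteq A$ and count common neighbours in $B$. The expected count is $\sum_{v\in B}\binom{d(v)}{s}/\binom{|A|}{s}$ where $d(v)$ is the degree of $v\in B$ into $A$; convexity and $\sum_v d(v)=e(F)\ge(\rho/2)|A||B|$ give at least $|B|\binom{(\rho/2)|A|}{s}/\binom{|A|}{s}\ge |B|(\rho/4)^s$, where the hypothesis $s\le\rho|A|/4+1$ is used exactly where it belongs, to absorb the $-s+1$ in $\bigl((\rho|A|/2-s+1)/|A|\bigr)^s$. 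Then $|B|(\rho/4)^s\ge |B|^{1+\rho^k4^{-k^2+k}\log(\rho/4)}\ge|B|^{1-\rho^{k-1}}$ using $\rho\log(4/\rho)<4^{k^2-k}$, with the correct prefactor $|B|$ rather than $|A|$. Your final bookkeeping (the inequality $s\,|\log(\rho/4)|\le\rho^{k-1}\log|B|$) is essentially right once the sides are corrected, but as written the proof establishes a different, and in the relevant regime vacuously unattainable, conclusion.
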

\begin{proof}
Let $m \defined |A|$
and $n \defined |B|$,
and define
\begin{equation*}
  t \defined \max\{ x :
    \text{there exists $K_2[s,x] \subseteq F$ with part of size  $s$ in $A$}
    \}.
\end{equation*}

For any $X \subseteq A$,
write $d(X)$ for the number of vertices
that are neighbours of all vertices of $X$.
For each $X$ with $|X| = s$,
we have $d(X) \leq t$,
thus
\begin{equation}
\label{eq:niki_lemma}
  \sum_{v \in B} \binom{d(v)}{s}
    = \sum_{\substack{X\subset A \\ |X| = s}} d(X)
    \leq t \binom{m}{s}.
\end{equation}

By convexity of function
$x \mapsto \binom{x}{s}\mathbbm{1}_{\{x \geq s-1\}}$,
we obtain
\begin{equation*}
  \sum_{v \in B} \binom{d(v)}{s}
    \geq n \binom{e(F)/n}{s}
    \geq n \binom{\rho m/2}{s}.
\end{equation*}
Combining this inequality with \cref{eq:niki_lemma}, we have
\begin{equation*}
  t \;
    \geq n \binom{\rho m/2}{s}\binom{m}{s}^{-1}
    \geq n \left( \frac{\rho}{4}\right)^s
    \geq n^{1 + \rho^k 4^{-k^2 + k} \log(\rho/4)},
\end{equation*}
where we used that $s \leq \rho m/4 + 1$ on the last step.
Since $\rho \log(4/\rho) < 4^{k^2 - k}$
we have
\begin{equation*}
  t \geq n^{1- \rho^{k-1}}. \qedhere
\end{equation*}
\end{proof}

Before proceeding to the proof of \Cref{thm:nikiforov_blowup},
we introduce some notation.
For a subgraph $G$ of $H[n]$,
denote by $\mathcal{N}(H,G)$ the set of canonical copies of $H$ in $G$.
Given $\mathcal{L} \subseteq \mathcal{N}(H,G)$
and a subgraph $H'$ of $H$,
we write $\mathcal{N}(H',\mathcal{L})$
to denote the set of canonical copies of $H'$
that are contained in some member of $\mathcal{L}$.
Also, given a subgraph $G' \subseteq G$
such that $G' = H[t_1,\dotsc,t_k]$,
we say that a family $\mathcal{L} \subseteq \mathcal{N}(H,G)$
\emph{covers} $G'$
if $E(G') \subseteq \mathcal{N}(K_2,\mathcal{L})$,
that is, the union of the edges of elements of $\mathcal{L}$
covers the graph $G'$,
and there are $\min\{t_1,\dotsc, t_k\}$ disjoint elements of $\mathcal{L}$
as subgraphs of $G'$.

Still assuming that $G$ is a subgraph of $H[n]$,
for a vertex $v \in V(H)$
we denote by $G-v$ the subgraph of $(H-v)[n]$
obtained from $G$ by the removal of the vertex class of $v$ in $H[n]$.
Finally, for any subset $\mathcal{L} \subseteq \mathcal{N}(H,G)$,
and $R \in \mathcal{N}(H-v,G-v)$,
we denote by $d_{\mathcal{L}}(R)$ the number ways that we can extend $R$,
a canonical copy of $H-v$,
to a element of $\mathcal{L}$.

\begin{proof}[Proof of Theorem~\ref{thm:nikiforov_blowup}]
We are going to prove by induction on $k \geq 2$
the following statement:
that every subset $\mathcal{M} \subseteq \mathcal{N}(H,H[n])$
of canonical copies of $H$ in $H[n]$,
with $|\mathcal{M}| \geq \rho n^{k}$,
covers a $H[t,\dotsc,t,n^{1-\rho^{k-1}}]$
with $t = \rho^{k}4^{-k^2+k} \log n$.

For $k = 2$, let $\mathcal{M} \subseteq \mathcal{N}(K_2,K_2[n])$
with $|\mathcal{M}| \geq \rho n^2$
and apply \Cref{lem:nikiforov} to $K_2[n]$.
We obtain that $\mathcal{M}$ covers
a $K_2[\rho^2 4^{-2^2+2} \log n, n^{1- \rho}]$.

Now we proceed to the induction step, with $k > 2$.
Let $G$ be a subgraph of $H[n]$
with $| \mathcal{N}(H,G) | \geq \rho n^{k}$
and let $v \in V(H)$.
The first step is to show that there is
a subset $\mathcal{L} \subseteq \mathcal{N}(H,G)$,
with $|\mathcal{L}| \geq (\rho/2)n^{k}$
such that for all $R \in \mathcal{L}$,
$d_{\mathcal{L}}(R-v) \geq (\rho/2) n$.
We construct this subset via the following procedure:

\begin{algorithmic}
\State $\mathcal{L} \gets \mathcal{N}(H,G)$.
\While {there exists an $R \in \mathcal{L}$
        with $d_{\mathcal{L}}(R-v) < (\rho/2) n$}
  \State $\mathcal{L} \gets \mathcal{L} \setminus \{ R' \in \mathcal{L}
          : R' \text{ is an extension  of } R-v \}$.
\EndWhile
\end{algorithmic}

When it ends,
we have a subset $\mathcal{L}$ with the property that
$d_{\mathcal{L}}(R-v) \geq (\rho/2) n$
for all $R \in \mathcal{L}$.
Also, we have
\begin{equation*}
  |\mathcal{L}|
    > |\mathcal{N}(H,G)| - (\rho/2)n |\mathcal{N}(H-v,G-v)|
    \geq (\rho/2)n^{k}.
\end{equation*}

Now, observe that
$\mathcal{N}(H-v,\mathcal{L}) \subseteq \mathcal{N}(H-v,G-v)$
with
\begin{equation*}
  |\mathcal{N}(H - v, \mathcal{L})|
    \geq |\mathcal{L}| / n
    \geq (\rho/2) n^{k-1}.
\end{equation*}
By the induction hypothesis,
$\mathcal{N}(H - v, \mathcal{L})$ covers a copy of $(H-v)[t']$,
where we have
$t' = \lfloor (\rho/2)^{k-1}4^{-(k-1)^2+k-1} \log n \rfloor$.

Now we build a bipartite graph $F$ with parts $A$ and $B$,
where $A$ is a set of disjoint canonical copies of $H-v$
in the blowup $(H-v)[t']$,
and $B$ is the vertex class of $v$.
This gives us $|A| = t'$, $|B| = n$.
We put an edge between a copy of $H - v$
and a vertex $u \in B$ if together,
they form an element of $\mathcal{L}$.
Therefore,
\begin{equation*}
  e(F)
    \geq d_{\mathcal{L}}(R) |A|
    \geq (\rho/2) n |A|
    = (\rho/2) |A| |B|.
\end{equation*}

We will apply \Cref{lem:nikiforov}
to the bipartite graph $F$.
If $t \defined \lfloor \rho^{k}4^{-k^2+k} \log|B| \rfloor$,
we have to check that $t \leq \rho |A|/4 + 1$.
Indeed
\begin{align*}
  t
    &\leq \rho^{k}4^{-k^2+k} \log n
      \leq (\rho/4) (\rho/2)^{k-1} 4^{-k^2 + k + 1 + (k-1)/2}\log n \\
    &\leq (\rho/4) (\rho/2)^{k-1} 4^{-(k-1)^2 + k - 1}\log n
      \leq \rho |A|/4 + 1,
\end{align*}
where we used that
$-k^2+k+1+(k-1)/2 \leq -(k-1)^2+k-1$
for $k \geq 2$.
Thus, by \Cref{lem:nikiforov},
we have $K_2[t,n^{1-\rho^{k-1}}] \subseteq F$,
with parts $A_0 \subseteq A$ of size $t$
and $B_0 \subseteq B$ of size $n^{1-\rho^{k-1}}$.
Let $H^*$ be the subgraph of $(H-v)[n]$
induced by the union of the members of $A_0$.
For every copy $H'$ of $H-v$ in $A_0$,
it can be joined to any vertex of $u \in B_0$
to form a copy of $H$.
This implies that $H^*$ covers a copy of
$H[|A_0|, \dotsc, |A_0|, |B_0|] = H[t,\dotsc,t,n^{1-\rho^{k-1}}]$.
\end{proof}


\end{document}